\DeclareMathOperator*{\argmin}{arg\,min}
\newtheorem{theorem}{Theorem}
\newtheorem{definition}{Definition}
\newtheorem{lemma}{Lemma}
\newtheorem{corollary}{Corollary}
\newtheorem{proposition}{Proposition}
\theoremstyle{definition}
\renewcommand{\maketitle}{
\pagestyle{plain}
\vspace*{\baselineskip}
\begin{center}
\MakeUppercase{\small Philip Saltenberger} \\ \emph{Institute for Numerical Analysis, TU Braunschweig} \\ (E-Mail: philip.saltenberger@tu-bs.de) \\[0.25cm]
\MakeUppercase{\small Michel-Niklas Senn} \\ \emph{Institute for Numerical Analysis, TU Braunschweig} \\ (E-Mail: m.senn@tu-braunschweig.de)

\vspace*{3\baselineskip}

\MakeUppercase{\large \@title}
\end{center} \vskip-\baselineskip
}
\titleformat{\section}[hang]{\scshape}{\thesection. }{0pt}{\centering}[]
\title{On skew-Hamiltonian Matrices and their
Krylov-Lagrangian Subspaces}
\begin{document}
\maketitle

\begin{abstract}
It is a well-known fact that the Krylov space $\mathcal{K}_j(H,x)$ generated by a skew-Hamiltonian matrix $H \in \mathbb{R}^{2n \times 2n}$ and some $x \in \mathbb{R}^{2n}$ is isotropic for any $j \in \mathbb{N}$.
For any given isotropic subspace $\mathcal{L} \subset \mathbb{R}^{2n}$ of dimension $n$---which is called a Lagrangian subspace---the question whether $\mathcal{L}$ can be generated as the Krylov space of some skew-Hamiltonian matrix is considered. The affine variety $\mathbb{HK}$ of all skew-Hamiltonian matrices $H \in \mathbb{R}^{2n \times 2n}$ that generate $\mathcal{L}$ as a Krylov space is analyzed. Existence and uniqueness results are proven, the dimension of $\mathbb{HK}$ is found and skew-Hamiltonian matrices with minimal $2$-norm, minimal Frobenius norm and prescribed eigenvalues in $\mathbb{HK}$ are identified. Some applications of the presented results are given.
\end{abstract}

\section{Introduction} \label{intro-sec}

This work establishes a link between three important and well-known concepts and tools from linear algebra: Krylov spaces, isotropic subspaces related to a special bilinear form and skew-Hamiltonian matrices. Before we start, we review all three concepts briefly. To set the stage, let
\begin{equation} J_{2n} = \begin{bmatrix} 0 & I_n \\ -I_n & 0 \end{bmatrix} \in \mathbb{R}^{2n \times 2n} \label{equ:J} \end{equation}
and consider the bilinear form $[x,y]_{J_{2n}} :=x^TJ_{2n}y$ on $\mathbb{R}^{2n} \times \mathbb{R}^{2n}$. 
The two main concepts related to the bilinear form $[ \cdot, \cdot]_{J_{2n}}$ we consider here are isotropic subspaces and skew-Hamiltonian operators.

A subspace $\mathcal{L} \subset \mathbb{R}^{2n}$ such that $[x,y]_{J_{2n}}=0$ holds for any $x,y \in \mathcal{L}$ is called isotropic. Such subspaces arise from the indefiniteness of $J_{2n}$ (which has eigenvalues $+ \imath$ and $- \imath$) and have no analogue for symmetric positive definite inner products. It can be shown that the maximum possible dimension of an isotropic subspace $\mathcal{L} \subset \mathbb{R}^{2n}$ is $n$. In this case $\mathcal{L}$ is called Lagrangian subspace \cite[Def.\,6]{Benner05}. Lagrangian subspaces are well-studied and play a crucial role for structured decompositions of (skew)-Hamiltonian matrices \cite{PVL81} or the solution of algebraic Riccati equations \cite{Abou03}.

A matrix $H \in \mathbb{R}^{2n \times 2n}$ is called skew-Hamiltonian if $J_{2n}^TH^TJ_{2n} = H$ holds. Along with Hamiltonian matrices which satisfy $J_{2n}^TH^TJ_{2n}=-H$ these matrices arise frequently in systems and control theory \cite{Benner05} or quadratic eigenvalue problems \cite{Tiss01}. Skew-Hamiltonian matrices can be interpreted as adjoint operators with respect to the bilinear form $[\cdot, \cdot]_{J_{2n}}$. In fact, a skew-Hamiltonian matrix $H \in \mathbb{R}^{2n \times 2n}$ satisfies $[x,Hy]_{J_{2n}} = [Hx,y]_{J_{2n}}$ for all $x,y \in \mathbb{R}^{2n}$. On the other hand, a Hamiltonian matrix $H$ always satisfies $[x,Hy]_{J_{2n}} = [-Hx,y]_{J_{2n}}$ and therefore represents a skew-adjoint operator with respect to $[\cdot, \cdot]_{J_{2n}}$.

Here, we are mainly interested in Krylov subspaces of skew-Hamiltonian matrices.
Given some matrix $A \in \mathbb{R}^{2n \times 2n}$ and some vector $x \in \mathbb{R}^{2n}$ the $j$-th Krylov subspace for $A$ and $x$ is defined as
$$ \mathcal{K}_j(A,x) = \textnormal{span} \lbrace x,Ax,A^2x, \ldots , A^{j-1}x \rbrace.$$
We sometimes refer to $(x,Ax, \ldots , A^{j-1}x)$ as a Krylov sequence for $A$ and $x$. Krylov subspaces are a very important tool in numerical linear algebra since a large amount of the most efficient algorithms for solving eigenvalue problems and linear systems is based on Krylov subspaces \cite{Vorst, Liesen}.

The fundamental relationship between all three concepts and definitions is given in the next proposition.
It is the starting point of our investigations and can be found with its proof in \cite[Prop.\,3.3]{MehrWat}.

\begin{proposition} \label{prop:Krylov-is-Lagrangian}
Let $H \in \mathbb{R}^{2n \times 2n}$ be skew-Hamiltonian and $x \in \mathbb{R}^{2n}$. Then the Krylov subspace $\mathcal{K}_j(H,x)$ is isotropic for any $j \in \mathbb{N}$.
\end{proposition}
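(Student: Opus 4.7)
The plan is to show that $[H^i x, H^k x]_{J_{2n}} = 0$ for every $0 \le i, k \le j-1$, which by bilinearity is equivalent to isotropy of $\mathcal{K}_j(H,x)$.

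The two algebraic facts I would combine are: first, the bilinear form $[\cdot,\cdot]_{J_{2n}}$ is skew-symmetric, since $J_{2n}^T = -J_{2n}$ gives $[y,z]_{J_{2n}} = y^T J_{2n} z = -(z^T J_{2n} y) = -[z,y]_{J_{2n}}$; second, $H$ is self-adjoint with respect to $[\cdot,\cdot]_{J_{2n}}$, as noted in the introduction, i.e.\ $[u, Hv]_{J_{2n}} = [Hu, v]_{J_{2n}}$ for all $u,v \in \mathbb{R}^{2n}$.

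I would then apply the self-adjoint property iteratively to transfer powers of $H$ from one side to the other. Concretely, starting from $[H^i x, H^k x]_{J_{2n}}$, each application yields $[H^{i+1} x, H^{k-1} x]_{J_{2n}}$, and after $k$ steps we obtain
\[
[H^i x, H^k x]_{J_{2n}} = [H^{i+k} x, x]_{J_{2n}}.
\]
By the same process, applied in the reverse direction starting from the right-hand side, $[H^{i+k} x, x]_{J_{2n}} = [x, H^{i+k} x]_{J_{2n}}$ after $i+k$ shifts. On the other hand, skew-symmetry gives $[H^{i+k} x, x]_{J_{2n}} = -[x, H^{i+k} x]_{J_{2n}}$. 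Combining these two identities forces $[H^{i+k}x, x]_{J_{2n}} = 0$, and hence $[H^i x, H^k x]_{J_{2n}} = 0$.

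There is no real obstacle here: the argument is essentially a one-line combination of skew-symmetry of the form with self-adjointness of $H$. The only thing to be mildly careful about is bookkeeping of the direction of the shifts (making sure the exponent on the right does not go negative during the reduction), and presenting the induction on $k$ cleanly so that the shift identity $[H^i x, H^k x]_{J_{2n}} = [H^{i+k} x, x]_{J_{2n}}$ is properly justified before skew-symmetry is invoked.
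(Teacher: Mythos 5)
Your argument is correct: combining the self-adjointness $[Hu,v]_{J_{2n}}=[u,Hv]_{J_{2n}}$ of a skew-Hamiltonian matrix with the skew-symmetry of the form to get $[x,H^{i+k}x]_{J_{2n}}=-[x,H^{i+k}x]_{J_{2n}}=0$ is exactly the standard proof. Note that the paper itself does not prove this proposition but defers to \cite[Prop.\,3.3]{MehrWat}; your reasoning matches that reference's approach, so nothing further is needed beyond writing the induction on the shift identity cleanly.
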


In other words, ``the beauty of skew-Hamiltonian operators is that the Krylov subspaces that they generate are isotropic'' \cite[p.\,1910]{MehrWat}. This fact is exploited in the SHIRA algorithm introduced in \cite{MehrWat} for the solution of Hamiltonian eigenvalue problems and its variants, see \cite{BennEff, MehrWat1}.

In this work we turn the statement of Proposition \ref{prop:Krylov-is-Lagrangian} upside down. That is, we consider the question whether every isotropic subspace $\mathcal{L} \subset \mathbb{R}^{2n}$ is the Krylov space of some skew-Hamiltonian matrix $H \in \mathbb{R}^{2n \times 2n}$ and answer it positively. Additionally, we characterize the set of all such skew-Hamiltonian matrices $H$ for which a given isotropic subspace $\mathcal{L}$ arises as a Krylov space. Finally, we identify elements in this set with special additional properties. With all our investigations we focus on Lagrangian subspaces, i.e., isotropic subspaces of dimension $n$. Whenever some Lagrangian subspace $\mathcal{L} \subset \mathbb{R}^{2n}$ arises as a Krylov subspace of a skew-Hamiltonian matrix $H \in \mathbb{R}^{2n \times 2n}$, there is a sequence of vectors $x_1, x_2 = Hx_1, x_3=H^2x_1, \ldots , x_n = H^{n-1}x_1$ such that $$\mathcal{L} = \mathcal{K}_n(H,x_1) = \textnormal{span} \lbrace x_1,x_2, \ldots , x_n \rbrace.$$ This brings us to the following definition.

\begin{definition} \label{def:realizing-set}
Let $\mathcal{L} \subset \mathbb{R}^{2n}$ be a Lagrangian subspace with a given ordered basis $\mathbf{B} = (x_1, \ldots , x_n)$.
We say that a skew-Hamiltonian matrix $H \in \mathbb{R}^{2n \times 2n}$ realizes $\mathcal{L}$ via $\mathbf{B}$ as a Krylov subspace, if the relation
\begin{equation} Hx_k = x_{k+1} \label{equ:Krylov-relations} \end{equation} holds for all $k=1, \ldots , n-1$. The set of all skew-Hamiltonian matrices $H \in \mathbb{R}^{2n \times 2n}$ satisfying \eqref{equ:Krylov-relations} is denoted either by $\mathbb{HK}(x_1, \ldots , x_n)$ or $\mathbb{HK}(\mathbf{B})$.
\end{definition}

The main purpose of this work is the thorough analysis of the set $\mathbb{HK}(\mathbf{B})$ of skew-Hamiltonian matrices $H \in \mathbb{R}^{2n \times 2n}$ that generate a given Lagrangian subspace $\mathcal{L} \subset \mathbb{R}^{2n}$ as a Krylov space $\mathcal{K}_n(H,x)$ for some $x \in \mathcal{L}$ resulting in the Krylov sequence $\mathbf{B}=(x,Hx,\ldots , H^{n-1}x)$. The task we consider is related to the technique of dynamic mode decomposition (DMD) in model reduction. In DMD a given set of snapshots $x_1, \ldots, x_n$ is assumed to be correlated via $Ax_k = x_{k+1}$, $k=1, \ldots , n-1,$ for a matrix $A$ and one is interested in finding eigenvalues and eigenvectors of $A$ without determining $A$ first \cite{dmd}. In contrast to that, our goal is to determine all those matrices $A$ explicitly under the additional assumptions that $\textnormal{span}\lbrace x_1, \ldots , x_n \rbrace$ is a Lagrangian subspace and $A$ is a skew-Hamiltonian matrix.

This work is structured as follows:

\begin{itemize}
\item[(a)] In Section \ref{sec:construction} we  show how to construct a particular skew-Hamiltonian matrix $\widehat{H}$ in $\mathbb{HK}(\mathbf{B})$ (Theorem \ref{thm:main-existence}) and use this result to derive an explicit pa\-ra\-met\-ri\-za\-tion of $\mathbb{HK}(\mathbf{B})$ (Theorem \ref{thm:complete_HK}). In consequence, for any ordered basis $\mathbf{B}$ of $\mathcal{L}$ the set $\mathbb{HK}(\mathbf{B})$ is proven to be nonempty. Moreover, we show that $\mathbb{HK}(x_1, \ldots , x_n)$ is an affine subspace of the vector space of all skew-Hamiltonian matrices in $\mathbb{R}^{2n \times 2n}$ and prove that its dimension is $n(n+1)/2$ (Corollary \ref{thm:main-dimension}).
\item[(b)] In Section \ref{sec:minimum-norm} we prove that the particular matrix $\widehat{H} \in \mathbb{HK}(x_1, \ldots , x_n)$ constructed in Theorem \ref{thm:main-existence} is very special since it has minimal $2$-norm and minimal Frobenius norm among all matrices in $\mathbb{HK}(\mathbf{B})$ (Theorems \ref{thm:leastnorm} and \ref{thm:leastnorm2}). Since a Lagrangian subspace $\mathcal{L} \subset \mathbb{R}^{2n}$ arising as a Krylov space of some skew-Hamiltonian matrix $H \in \mathbb{R}^{2n \times 2n}$ is always $H$-invariant, we additionally characterize those matrices $H \in \mathbb{HK}(\mathbf{B})$ where the spectrum of $H|_{\mathcal{L}}$ coincides with a given set of $n$ values.
\item[(c)] In Section \ref{sec:applications} we consider a scenario where the results from Section \ref{sec:minimum-norm} can be applied. Whenever $H$ is a perturbed skew-Hamiltonian matrix, i.e. $H \neq J_{2n}^TH^TJ_{2n}$, the Krylov space $\mathcal{K}_n(H, x)$ will in general neither be isotropic nor invariant. Thus, we approximate the Krylov sequence $(x,Hx, \ldots , H^{n-1}x)$ by a sequence of vectors $\mathbf{B} = (y_1, \ldots , y_n)$ spanning a Lagrangian subspace and we determine the best possible skew-Hamiltonian approximation $H' \in \mathbb{HK}(\mathbf{B})$  to $H$.
\end{itemize}

\subsection{Notation and preliminary results}
\label{sec:preliminaries}

Throughout the subsequent sections we use the short-hand-notation $H^\star$ to denote $J_{2n}^TH^TJ_{2n}$ for any $H \in \mathbb{R}^{2n \times 2n}$. In the context of the indefinite inner product $[x,y]_{J_{2n}} = x^TJ_{2n}y$, the matrix $H^\star $ is the adjoint of $H$. In other words, $H \in \mathbb{R}^{2n \times 2n}$ is skew-Hamiltonian if and only if $H = H^\star$. Moreover, notice that $(H^\star)^\star = H$ holds for any arbitrary matrix $H$. We write $\textnormal{span} \lbrace z_1, \ldots , z_k \rbrace$ to denote the subspace spanned by the vectors $z_1, \ldots , z_k$ and use the notation $\mathcal{S}^{\bot}$ for the orthogonal complement of a subspace $S \subset \mathbb{R}^{2n}$. If $A \in \mathbb{R}^{m \times n}$ we denote the Moore-Penrose pseudoinverse by $A^+$. The subspace spanned by the columns of $A$ is denoted by $\textnormal{range}(A)$.

There exists a direct relationship between skew-Hamiltonian and skew-symmetric matrices that is stated in Proposition \ref{prop:skew} and which is used several times throughout this work.

\begin{proposition} \label{prop:skew}
For any skew-symmetric matrix $S \in \mathbb{R}^{2n \times 2n}$ the matrix $H := J_{2n}^TS$ is skew-Hamiltonian. Moreover, for any skew-Hamiltonian matrix $H \in \mathbb{R}^{2n \times 2n}$ there is a unique skew-symmetric matrix $S$ such that $H = J_{2n}^TS$.
\end{proposition}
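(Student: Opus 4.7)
The plan is to verify both directions of the correspondence by direct computation, leveraging two elementary identities for $J_{2n}$: it is itself skew-symmetric (so $J_{2n}^T = -J_{2n}$) and it satisfies $J_{2n}^2 = -I_{2n}$, from which $J_{2n} J_{2n}^T = I_{2n}$ and $J_{2n}$ is invertible.

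For the first claim, given $S$ with $S^T = -S$, I would set $H = J_{2n}^T S$ and compute $H^\star = J_{2n}^T H^T J_{2n}$. Expanding the transpose gives $H^T = S^T J_{2n} = -S J_{2n}$, so
\[
J_{2n}^T H^T J_{2n} = -J_{2n}^T S J_{2n}^2 = -J_{2n}^T S (-I_{2n}) = J_{2n}^T S = H,
\]
which shows that $H$ is skew-Hamiltonian.

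For the second claim, since $J_{2n}$ is invertible the equation $H = J_{2n}^T S$ uniquely determines $S$ as $S = (J_{2n}^T)^{-1} H = J_{2n} H$ (using that $J_{2n}^{-1} = -J_{2n} = J_{2n}^T$, hence $(J_{2n}^T)^{-1} = J_{2n}$). This already handles existence and uniqueness at the level of linear algebra; it remains to check that the so-defined $S$ is skew-symmetric whenever $H$ is skew-Hamiltonian. Taking the transpose of the identity $H = J_{2n}^T H^T J_{2n}$ yields $H^T = J_{2n}^T H J_{2n}$, and then
\[
S^T = (J_{2n} H)^T = H^T J_{2n}^T = J_{2n}^T H J_{2n} J_{2n}^T = J_{2n}^T H = -J_{2n} H = -S,
\]
which completes the proof.

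There is no real obstacle here: the argument is a two-line manipulation in each direction, and the only subtlety is keeping track of the sign conventions $J_{2n}^T = -J_{2n}$ and $J_{2n}^2 = -I_{2n}$ when transposing products. Uniqueness is automatic from invertibility of $J_{2n}$ and needs no separate argument.
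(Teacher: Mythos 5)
Your proof is correct and follows essentially the same route as the paper: a direct computation with the identities $J_{2n}^T=-J_{2n}$ and $J_{2n}^2=-I_{2n}$, recovering $S=J_{2n}H$ and checking skew-symmetry in one direction and verifying $H^\star=H$ in the other. The paper phrases both directions through the single observation that $H$ is skew-Hamiltonian iff $J_{2n}H$ is skew-symmetric, but the content is identical, and your explicit remark that uniqueness follows from invertibility of $J_{2n}$ is a welcome touch the paper leaves implicit.
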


\begin{proof}
Let $H \in \mathbb{R}^{2n \times 2n}$ be skew-Hamiltonian. Then, as $(J_{2n}H)^T = -J_{2n}H$ holds, the matrix $S := J_{2n}H$ is skew-symmetric. Now $H$ can be expressed as $J_{2n}^TS$ where $S$ is skew-symmetric. On the other hand, if $S$ is skew-symmetric and $H := J_{2n}^TS$, then $(J_{2n}H)^T = S^T = -S = -(J_{2n}H)$. Therefore, $H$ is skew-Hamiltonian.
\end{proof}

As $J_{2n}$ is nonsingular, the mapping $A \mapsto J_{2n}^TA$
provides an isomorphism between the vector subspaces of skew-symmetric and skew-Hamiltonian matrices in $\mathbb{R}^{2n \times 2n}$. Consequently, both vector spaces have dimension $n(2n-1)$.

\section{Construction and characterization of $\mathbb{HK}(\mathbf{B})$}
\label{sec:construction}

According to Proposition \ref{prop:Krylov-is-Lagrangian}, the Krylov subspace $\mathcal{K}_j(H,x)$, $j \in \mathbb{N}$, of any skew-Hamiltonian matrix $H \in \mathbb{R}^{2n \times 2n}$ and any $x \in \mathbb{R}^{2n}$ is always isotropic. Therefore, $\mathcal{K}_n(H,x)$ is Lagrangian whenever $\dim(\mathcal{K}_n(H,x)) = n$.
The natural inverse question is, if for any given Lagrangian subspace $\mathcal{L} \subset \mathbb{R}^{2n}$ there always exists some skew-Hamiltonian matrix $H \in \mathbb{R}^{2n \times 2n}$ and a basis $\mathbf{B}$ of $\mathcal{L}$ such that $\mathcal{L} = \mathcal{K}_n(H,x)$ where $\mathbf{B}=(x,Hx, \ldots , H^{n-1}x)$. 
This question is answered in Theorem \ref{thm:main-existence} with an explicit construction.

In Theorem \ref{thm:main-existence} and throughout this work we use the following definitions: given some Lagrangian subspace $\mathcal{L} \subset \mathbb{R}^{2n}$ and a basis $\mathbf{B}=(x_1, \ldots , x_n)$ of $\mathcal{L}$ we define
\begin{equation} \begin{aligned}
X_L &:= \begin{bmatrix} x_1 & x_2 & \cdots & x_{n-1} \end{bmatrix} \in \mathbb{R}^{2n \times n-1},\\
X_R &:= \begin{bmatrix} x_2 & x_3 & \cdots & x_n \end{bmatrix} \in \mathbb{R}^{2n \times n-1}.
\end{aligned} \label{equ:XLXR} \end{equation}
With the definitions in \eqref{equ:XLXR} we now have the following result.

\begin{theorem} \label{thm:main-existence}
Let $\mathcal{L} \subset \mathbb{R}^{2n}$ be a Lagrangian subspace and $\mathbf{B} = (x_1, \ldots , x_n)$ any ordered basis of $\mathcal{L}$. Consider the matrix
\begin{equation} \widehat{H} := X_RX_L^+ + J_{2n}^T(X_RX_L^+)^TJ_{2n} \in \mathbb{R}^{2n \times 2n}. \label{equ:H_hat} \end{equation}
Then $\widehat{H} \in \mathbb{HK}(\mathbf{B})$.
\end{theorem}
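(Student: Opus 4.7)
The plan is to verify the two defining properties of membership in $\mathbb{HK}(\mathbf{B})$ separately: (i) $\widehat{H}$ is skew-Hamiltonian, i.e.\ $\widehat{H}^\star = \widehat{H}$, and (ii) $\widehat{H} x_k = x_{k+1}$ for all $k = 1, \ldots, n-1$. Write $M := X_R X_L^+$ so that $\widehat{H} = M + M^\star$.

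For (i), this is essentially immediate from the construction. Using that $(\cdot)^\star$ is an involution (a fact already noted in Section~\ref{sec:preliminaries}), we have
\begin{equation*}
\widehat{H}^\star = (M + M^\star)^\star = M^\star + (M^\star)^\star = M^\star + M = \widehat{H}.
\end{equation*}

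For (ii), I would split the action of $\widehat{H}$ on $x_k$ into the two summands. Since $x_1, \ldots, x_n$ is a basis of $\mathcal{L}$, in particular $x_1, \ldots, x_{n-1}$ are linearly independent, so $X_L$ has full column rank; hence $X_L^+ X_L = I_{n-1}$ and consequently $X_L^+ x_k = e_k$ for $k = 1, \ldots, n-1$. This yields $M x_k = X_R X_L^+ x_k = X_R e_k = x_{k+1}$, which gives the desired action from the first summand of $\widehat{H}$. The main content is then to show that the second summand vanishes on $x_k$. Here one uses the Lagrangian hypothesis: expanding
\begin{equation*}
M^\star x_k = J_{2n}^T (X_L^+)^T X_R^T J_{2n} x_k,
\end{equation*}
the vector $X_R^T J_{2n} x_k \in \mathbb{R}^{n-1}$ has $i$-th entry $x_{i+1}^T J_{2n} x_k = [x_{i+1}, x_k]_{J_{2n}}$, which equals $0$ since $x_{i+1}, x_k \in \mathcal{L}$ and $\mathcal{L}$ is isotropic. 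Thus $M^\star x_k = 0$, and combining with $M x_k = x_{k+1}$ gives $\widehat{H} x_k = x_{k+1}$ as required.

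Neither step presents a serious obstacle; the conceptual point, and what should be emphasized in the write-up, is the clean division of labor in the construction $\widehat{H} = M + M^\star$: the $M$ piece alone already enforces the Krylov relations \eqref{equ:Krylov-relations} (thanks to $X_L$ having full column rank), while the added $M^\star$ piece is designed to \emph{symmetrize} $M$ with respect to $[\cdot,\cdot]_{J_{2n}}$ so that $\widehat{H}$ becomes skew-Hamiltonian; the Lagrangian property of $\mathcal{L}$ is precisely what ensures that adding $M^\star$ does not disturb the Krylov relations on the basis vectors $x_1,\ldots,x_{n-1}$.
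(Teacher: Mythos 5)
Your proof is correct and follows essentially the same route as the paper's: both decompose $\widehat{H} = K + K^\star$ with $K = X_RX_L^+$, use the involution property of $(\cdot)^\star$ for skew-Hamiltonicity, and use the isotropy of $\mathcal{L}$ (i.e.\ $X_R^TJ_{2n}x_k = 0$) to show the $K^\star$ summand annihilates $x_1,\ldots,x_{n-1}$ while $Kx_k = X_Re_k = x_{k+1}$. Your write-up is slightly more explicit about why $X_L^+x_k = e_k$ (full column rank of $X_L$), which the paper leaves implicit, but there is no substantive difference.
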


\begin{proof}
First notice that $\widehat{H}$ is of the form $K + K^\star$ where $K=X_RX_L^+$. Thus $(K+K^\star)^\star = K^\star + K = \widehat{H}$, so $\widehat{H}$ is skew-Hamiltonian.
Now, since $\mathcal{L}$ is Lagrangian, $X_R^TJ_{2n}x_k=0$ holds for all $x_k$, $k=1, \ldots , n$. In particular, for $x_1, \ldots, x_{n-1}$ we obtain
$$ \widehat{H}x_k = \big( X_RX_L^+ + J_{2n}^T(X_L^+)^TX_R^TJ_{2n} \big) x_k = X_RX_L^+x_k = X_Re_k = x_{k+1}$$
Thus we have $\widehat{H}x_k = x_{k+1}$ for all $k=1, \ldots , n-1$ which shows that $\widehat{H} \in \mathbb{HK}(\mathbf{B})$ and completes the proof.
\end{proof}

In other words, Theorem \ref{thm:main-existence} states that $\mathbb{HK}(x_1, \ldots , x_n) \neq \emptyset$ for any set of linearly independent vectors $x_1, \ldots , x_n$ spanning a Lagrangian subspace $\mathcal{L} \subset \mathbb{R}^{2n}$. Since the construction in Theorem \ref{thm:main-existence} works analogously if $\dim(\mathcal{L}) = k < n$ and $\mathbf{B}=(x_1, \ldots , x_k)$, we may now extend Proposition \ref{prop:Krylov-is-Lagrangian} by saying that \emph{a subspace $\mathcal{L} \subset \mathbb{R}^{2n}$ is isotropic if and only if it is the Krylov space of some skew-Hamiltonian matrix}.

Defining $K := X_RX_L^+ \in \mathbb{R}^{2n \times 2n}$ from the matrices in \eqref{equ:XLXR}, recall that $\widehat{H}$ in \eqref{equ:H_hat} can be expressed as $\widehat{H} = K + K^\star$. In view of the bilinear form $[x,y] = x^TJ_{2n}y$, the matrix $K$ has itself a very special property.

\begin{corollary}
Let $\mathcal{L} \subset \mathbb{R}^{2n}$ be a Lagrangian subspace and $\mathbf{B} = (x_1, \ldots , x_n)$ any ordered basis of $\mathcal{L}$.
Then for $K = X_RX_L^+$ we have
$$KK^\star = K^\star K = 0.$$
\end{corollary}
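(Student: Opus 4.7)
The plan is to expand both products directly and to show that each collapses to a product that contains a middle factor of the shape $Y^T J_{2n} Y$ with the columns of $Y$ lying in the Lagrangian subspace $\mathcal{L}$; such a factor vanishes automatically. The starting point is the explicit formula $K^\star = J_{2n}^T (X_L^+)^T X_R^T J_{2n}$, which follows by transposing $K = X_R X_L^+$.

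For $K^\star K$ the argument is immediate. Multiplying,
$$K^\star K = J_{2n}^T (X_L^+)^T \bigl(X_R^T J_{2n} X_R\bigr) X_L^+,$$
and the middle factor $X_R^T J_{2n} X_R$ vanishes because the columns $x_2,\ldots,x_n$ of $X_R$ all lie in $\mathcal{L}$ and $\mathcal{L}$ is Lagrangian. Hence $K^\star K = 0$.

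For $K K^\star$ I would proceed in the same spirit but use the structure of $X_L^+$. Since the vectors $x_1,\ldots,x_{n-1}$ are part of a basis of $\mathcal{L}$, the matrix $X_L$ has full column rank, so $X_L^+ = (X_L^T X_L)^{-1} X_L^T$ and $(X_L^+)^T = X_L (X_L^T X_L)^{-1}$. Substituting yields
$$K K^\star = X_R \bigl(X_L^+ J_{2n}^T (X_L^+)^T\bigr) X_R^T J_{2n} = X_R (X_L^T X_L)^{-1} \bigl(X_L^T J_{2n}^T X_L\bigr) (X_L^T X_L)^{-1} X_R^T J_{2n}.$$
Using $J_{2n}^T = -J_{2n}$, the inner bracket becomes $-X_L^T J_{2n} X_L = 0$, again by the Lagrangian property applied now to the columns of $X_L$. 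Therefore $K K^\star = 0$ as well.

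I do not anticipate a genuine obstacle. The only non-cosmetic observation needed is that $X_L$ has full column rank, which guarantees the clean Moore--Penrose formula used above; everything else is just reorganizing the matrix product so that a factor $X^T J_{2n} X$ with $\text{range}(X)\subset\mathcal{L}$ appears in the middle.
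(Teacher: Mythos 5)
Your proof is correct and follows essentially the same route as the paper: both arguments expand $KK^\star$ and $K^\star K$ using $X_L^+=(X_L^TX_L)^{-1}X_L^T$ and reduce everything to the vanishing of the middle factors $X_L^TJ_{2n}X_L$ and $X_R^TJ_{2n}X_R$, which is exactly the Lagrangian (isotropy) property of $\mathcal{L}$. Your explicit remark that $X_L$ has full column rank is a detail the paper leaves implicit, but otherwise the two proofs coincide.
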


\begin{proof}
Recall that $X_L^TJ_{2n}X_L = X_R^TJ_{2n}X_R = 0$ holds since $\mathcal{L}$ is Lagrangian. The proof now follows from the observation that $X_L^+ = (X_L^TX_L)^{-1}X_L^T$. In particular, for $KK^\star$ we obtain
\begin{align*}
KK^\star &= X_R \left( X_L^TX_L \right)^{-1}X_L^T J_{2n}^TX_L \left( X_L^TX_L \right)^{-1}X_R^T J_{2n} = 0
\end{align*}
since $X_L^TJ_{2n}^TX_L = - X_L^TJ_{2n}X_L = 0$. Similarly, $K^\star K=0$ follows by observing that $X_R^TJ_{2n}X_R = 0$ holds.
\end{proof}

Matrices $K \in \mathbb{R}^{2n \times 2n}$ with the property $KK^\star = K^\star K$ are called $J_{2n}$-normal. This type of matrices was investigated in, e.g., \cite{Mehl06}.

Recall that a matrix $A \in \mathbb{R}^{n \times n}$ is called nilpotent of index $k \in \mathbb{N}$ if $A^{k-1} \neq 0$ and $A^k = 0$ holds. For some orthonormal basis of a Lagrangian subspace we have the following result.

\begin{corollary}
Let $\mathcal{L} \subset \mathbb{R}^{2n}$ be a Lagrangian subspace and $\mathbf{B} = (x_1, \ldots , x_n)$ any ordered orthonormal basis of $\mathcal{L}$. Then the matrix $\widehat{H} \in \mathbb{HK}(\mathbf{B})$ defined in \eqref{equ:H_hat} is nilpotent of index $n$.
\end{corollary}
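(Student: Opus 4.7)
The plan is to combine two ingredients already at hand. From the preceding corollary, $KK^\star = K^\star K = 0$ for $K = X_R X_L^+$; this implies by a short induction that $\widehat{H}^m = (K+K^\star)^m = K^m + (K^\star)^m$ for every $m\geq 1$, since every mixed term of the form $K^a K^\star K^b \cdots$ appearing in the expansion contains an adjacent factor $KK^\star$ or $K^\star K$ and therefore vanishes. Next, orthonormality of $\mathbf{B}$ gives $X_L^T X_L = I_{n-1}$, so $X_L^+ = X_L^T$ and $K = X_R X_L^T$. The task thus reduces to showing $K^n = 0$ (which automatically yields $(K^\star)^n = 0$) together with $\widehat{H}^{n-1} \neq 0$.

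For the first, I would bracket the powers as $K^m = X_R (X_L^T X_R)^{m-1} X_L^T$ and identify the central factor. The $(i,j)$-entry of $X_L^T X_R$ equals $x_i^T x_{j+1} = \delta_{i,j+1}$ by orthonormality, so $X_L^T X_R$ is exactly the $(n-1)\times(n-1)$ strictly sub-diagonal shift matrix $N$, which is nilpotent of index $n-1$. Hence $K^n = X_R N^{n-1} X_L^T = 0$, and applying the involution $\star$ (using $(K^n)^\star = (K^\star)^n$) yields $(K^\star)^n = 0$. Combining with the opening identity gives $\widehat{H}^n = 0$.

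For the lower bound, no computation is needed: since $\widehat{H} \in \mathbb{HK}(\mathbf{B})$ by Theorem \ref{thm:main-existence}, Definition \ref{def:realizing-set} gives $\widehat{H}^{n-1}x_1 = x_n \neq 0$, so $\widehat{H}^{n-1} \neq 0$. Together with the previous step, $\widehat{H}$ is nilpotent of index exactly $n$.

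The only delicate point, and the place where orthonormality is actually used, is the identification of $X_L^T X_R$ with the nilpotent shift matrix; for a non-orthonormal basis the last column of $X_L^+ X_R$ is $X_L^+ x_n$, which is in general nonzero, and the argument would break down.
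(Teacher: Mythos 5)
Your proof is correct and follows essentially the same route as the paper: both reduce $\widehat{H}^m$ to $K^m + (K^\star)^m$ (the paper writes this power formula out explicitly and calls it ``verified straightforwardly'' from $X_L^TJ_{2n}X_L = X_R^TJ_{2n}X_R=0$, while you derive it cleanly by induction from the preceding corollary $KK^\star = K^\star K = 0$) and then identify $X_L^TX_R$ as the nilpotent shift matrix. Your explicit check that $\widehat{H}^{n-1}x_1 = x_n \neq 0$ is a small but welcome addition, since the paper only establishes $\widehat{H}^n = 0$ and leaves the lower bound on the nilpotency index implicit.
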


\begin{proof}
First note that, as $x_1, \ldots , x_n$ are orthogonal, $X_L^+ = X_L^T$. Due to the relations $X_L^TJ_{2n}X_L = 0$ and  $X_R^T J_{2n}X_R = 0$ it is verified straightly forward that
$$ \widehat{H}^k = X_R \left( X_L^TX_R \right)^{k-1} X_L^T + J_{2n}^T X_L \left[ \left( X_L^TX_R \right)^{k-1} \right]^T X_R^TJ_{2n}$$
holds for all $k \in \mathbb{N}$. Due to the orthogonality of $x_1, \ldots , x_n$ we have
$$X_L^TX_R = \begin{bmatrix} 0 & 0 & \cdots & 0 & 0 \\ 1 & 0 & & & 0 \\ 0 & 1 & \ddots & & \vdots \\ \vdots & & \ddots & \ddots & 0 \\ 0 & \cdots & 0 & 1 & 0 \end{bmatrix} \in \mathbb{R}^{n-1 \times n-1}$$
from which it follows directly that $(X_L^TX_R)^{n-1} = 0$. This in turn implies $\widehat{H}^n = 0$ and proves the result.
\end{proof}

The next theorem gives a comprehensive characterization of $\mathbb{HK}(\mathbf{B})$.

\begin{theorem} \label{thm:complete_HK}
Let $\mathcal{L} \subset \mathbb{R}^{2n}$ be a Lagrangian subspace and $\mathbf{B} = (x_1, \ldots , x_n)$ any ordered basis of $\mathcal{L}$. Moreover, let $X_L^{\bot} \in \mathbb{R}^{2n \times n+1}$ be a matrix whose columns are an orthonormal basis of $\textnormal{span} \lbrace x_1, \ldots , x_{n-1} \rbrace^{\bot}$ and let $\widehat{H} \in \mathbb{R}^{2n \times 2n}$ be the matrix defined in \eqref{equ:H_hat}.
Then $H \in \mathbb{HK}(\mathbf{B})$ if and only if it can be expressed as
\begin{equation} H = \widehat{H} + J_{2n}^TX_L^{\bot} S (X_L^{\bot})^T \label{equ:char_HK} \end{equation}
where $S \in \mathbb{R}^{(n+1) \times (n+1)}$ is some skew-symmetric matrix.
\end{theorem}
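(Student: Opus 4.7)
The plan is to prove both inclusions separately, leveraging the isomorphism from Proposition~\ref{prop:skew} to reduce the problem to a statement about skew-symmetric matrices.

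For the ``if'' direction, I would take an arbitrary skew-symmetric $S \in \mathbb{R}^{(n+1)\times(n+1)}$ and verify the two defining properties of $\mathbb{HK}(\mathbf{B})$. Skew-Hamiltonicity of $H$ follows because $\widehat{H}$ is skew-Hamiltonian by Theorem~\ref{thm:main-existence}, while $X_L^\bot S (X_L^\bot)^T$ is skew-symmetric, so that by Proposition~\ref{prop:skew} the summand $J_{2n}^T X_L^\bot S (X_L^\bot)^T$ is skew-Hamiltonian. For the Krylov relations, the point is that $x_k \in \spann\{x_1,\dots,x_{n-1}\} = \textnormal{range}(X_L)$ for $k=1,\dots,n-1$, and the columns of $X_L^\bot$ are orthogonal to that range. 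Thus $(X_L^\bot)^T x_k = 0$, the correction term annihilates $x_k$, and $Hx_k = \widehat{H}x_k = x_{k+1}$.

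For the ``only if'' direction, suppose $H \in \mathbb{HK}(\mathbf{B})$. Then $D := H - \widehat{H}$ is skew-Hamiltonian (difference of two skew-Hamiltonian matrices), and $Dx_k = 0$ for $k=1,\dots,n-1$, i.e.\ $DX_L = 0$. By Proposition~\ref{prop:skew} there is a unique skew-symmetric $S' \in \mathbb{R}^{2n \times 2n}$ with $D = J_{2n}^T S'$. Since $J_{2n}$ is nonsingular, $DX_L = 0$ gives $S'X_L = 0$, and transposing together with skew-symmetry $S'^T = -S'$ yields also $X_L^T S' = 0$. This two-sided annihilation is the crucial consequence of skew-symmetry and is what will make the $(n+1)\times(n+1)$ reduction work.

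To finish, I would expand $S'$ using the orthogonal resolution of the identity $I_{2n} = X_L X_L^+ + X_L^\bot (X_L^\bot)^T$, which holds because the columns of $X_L^\bot$ form an orthonormal basis of $\textnormal{range}(X_L)^\bot$. Multiplying $S'$ by this identity on both sides and discarding every term containing $X_L^+ S'$ or $S' X_L$ (both zero by the previous step, noting $X_L^+ = (X_L^T X_L)^{-1}X_L^T$) leaves $S' = X_L^\bot \widetilde{S} (X_L^\bot)^T$, where $\widetilde{S} := (X_L^\bot)^T S' X_L^\bot \in \mathbb{R}^{(n+1)\times(n+1)}$ is skew-symmetric because $S'$ is. Substituting back gives $H = \widehat{H} + J_{2n}^T X_L^\bot \widetilde{S}(X_L^\bot)^T$, which is the claimed parametrization.

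The only mild obstacle I anticipate is bookkeeping the two-sided annihilation $X_L^T S' = 0 = S' X_L$ and checking that the resulting $\widetilde{S}$ is indeed skew-symmetric of the stated size; everything else is a direct calculation using the Lagrangian property (already baked into Theorem~\ref{thm:main-existence}) and Proposition~\ref{prop:skew}.
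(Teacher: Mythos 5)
Your proof is correct and follows essentially the same route as the paper: both directions reduce to skew-symmetric matrices via Proposition~\ref{prop:skew}, exploit the two-sided annihilation $S'X_L = 0 = X_L^T S'$, and express $S'$ through the orthogonal splitting of $\mathbb{R}^{2n}$ into $\textnormal{range}(X_L)$ and its complement. The only (cosmetic) difference is that you insert the resolution of the identity $I_{2n} = X_LX_L^+ + X_L^{\bot}(X_L^{\bot})^T$ on both sides, whereas the paper conjugates by the orthogonal matrix $[\,X_L^{\bot}\;\widetilde{X}_L\,]$ and reads off the block structure.
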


\begin{proof}
Suppose $H \in \mathbb{HK}(\mathbf{B})$ and define $\Delta H := H - \widehat{H}$. Then $\Delta H \in \mathbb{R}^{2n \times 2n}$ is skew-Hamiltonian and $(\Delta H)x_k = 0$ holds for all $k=1, \ldots, n-1$. According to Proposition \ref{prop:skew} the matrix $\widetilde{S} := J_{2n}(\Delta H)$ is skew-symmetric and $\widetilde{S}x_k = 0$ still holds for all $k=1, \ldots , n-1$. Let $(\tilde{x}_1, \ldots , \tilde{x}_{n-1})$ be some orthonormal basis of $\textnormal{span} \lbrace x_1, \ldots , x_{n-1} \rbrace$ and set $\widetilde{X}_L := [ \, \tilde{x}_1 \; \cdots \; \tilde{x}_{n-1} \; ]$. Then the matrix $Z := [\; X_L^{\bot} \;  \widetilde{X}_L \; ] \in \mathbb{R}^{2n \times 2n}$ is orthogonal and
$$ Z^T\widetilde{S}Z =  \begin{bmatrix} (X_L^{\bot})^T\widetilde{S}X_L^{\bot} & (X_L^{\bot})^T\widetilde{S} \widetilde{X}_L \\ -\widetilde{X}_L^T\widetilde{S}^TX_L^{\bot} & \widetilde{X}_L^T\widetilde{S} \widetilde{X}_L \end{bmatrix} = \begin{bmatrix} (X_L^{\bot})^T\widetilde{S}X_L^{\bot} & 0 \\ 0 & 0 \end{bmatrix}$$
which shows that
\begin{equation} \widetilde{S} = Z \begin{bmatrix} (X_L^{\bot})^T\widetilde{S}X_L^{\bot} & 0 \\ 0 & 0 \end{bmatrix} Z^T = X_L^{\bot} S(X_L^{\bot})^T, \quad \textnormal{with} \; S = (X_L^{\bot})^T\widetilde{S}X_L^{\bot}. \label{equ:S_tilde} \end{equation}
Consequently, since $\Delta H = J_{2n}^T \widetilde{S}$ we obtain
$$H = \widehat{H}+ \Delta H = \widehat{H} + J_{2n}^T X_L^{\bot} S(X_L^{\bot})^T$$ where $S \in \mathbb{R}^{(n+1) \times (n+1)}$ is as defined in \eqref{equ:S_tilde}. Notice that $S$ is is skew-symmetric.
Therefore, $H$ is of the form \eqref{equ:char_HK}. On the other hand, if $H$ is as in \eqref{equ:char_HK} for some $S=-S^T \in \mathbb{R}^{(n+1) \times (n+1)}$, then for $x_1, \ldots , x_{n-1}$ we have
$$Hx_k = \widehat{H}x_k + J_{2n}^TX_L^{\bot} S (X_L^{\bot})^T x_k = \widehat{H}x_k = x_{k+1}.$$
Therefore $H \in \mathbb{HK}(\mathbf{B})$ which completes the proof.
\end{proof}
Defining the matrix subspace
$$ \mathcal{H}_0 := \big\lbrace J_{2n}^TX_L^{\bot} S (X_L^{\bot})^T \; ; \; S = -S^T \in \mathbb{R}^{(n+1) \times (n+1)} \big\rbrace$$
we see that $\mathbb{HK}(\mathbf{B}) = \widehat{H} + \mathcal{H}_0$ is an affine subspace of $\mathbb{R}^{2n \times 2n}$.
The following Corollary \ref{thm:main-dimension} states the dimension of $\mathbb{HK}(\mathbf{B})$ using the common definition $\dim(\mathbb{HK}(\mathbf{B})) = \dim(\mathcal{H}_0)$.

\begin{corollary} \label{thm:main-dimension}
Let $\mathcal{L} \subset \mathbb{R}^{2n}$ be a Lagrangian subspace and $\mathbf{B}= (x_1, \ldots , x_n)$ any ordered basis of $\mathcal{L}$. Then
\begin{equation} \dim \left( \mathbb{HK}(\mathbf{B}) \right) = \frac{n(n+1)}{2}. \label{equ:result-dim} \end{equation} \end{corollary}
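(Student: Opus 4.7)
The plan is to leverage Theorem \ref{thm:complete_HK} directly. That result identifies $\mathbb{HK}(\mathbf{B})$ with the affine space $\widehat{H} + \mathcal{H}_0$, so by the stated convention $\dim(\mathbb{HK}(\mathbf{B})) = \dim(\mathcal{H}_0)$. Hence the task reduces to computing the dimension of the linear subspace
$$\mathcal{H}_0 = \bigl\{ J_{2n}^T X_L^{\bot} S (X_L^{\bot})^T \; : \; S = -S^T \in \mathbb{R}^{(n+1)\times(n+1)}\bigr\} \subset \mathbb{R}^{2n \times 2n}.$$

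The natural approach is to view $\mathcal{H}_0$ as the image of the linear map
$$\Phi : \mathcal{S}_{n+1} \longrightarrow \mathbb{R}^{2n \times 2n}, \qquad \Phi(S) := J_{2n}^T X_L^{\bot} S (X_L^{\bot})^T,$$
where $\mathcal{S}_{n+1}$ denotes the vector space of skew-symmetric matrices in $\mathbb{R}^{(n+1)\times(n+1)}$. It is well known (and easy to check) that $\dim(\mathcal{S}_{n+1}) = (n+1)n/2$, which already matches the claimed dimension. Thus everything hinges on proving that $\Phi$ is injective, since then
$$\dim(\mathcal{H}_0) = \dim(\operatorname{range}(\Phi)) = \dim(\mathcal{S}_{n+1}) = \frac{n(n+1)}{2}.$$

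Injectivity is the only real step, and I expect it to be short rather than a genuine obstacle. If $\Phi(S) = 0$, then since $J_{2n}^T$ is invertible we get $X_L^{\bot} S (X_L^{\bot})^T = 0$. The columns of $X_L^{\bot}$ are orthonormal by construction, so $(X_L^{\bot})^T X_L^{\bot} = I_{n+1}$; multiplying the previous equation on the left by $(X_L^{\bot})^T$ and on the right by $X_L^{\bot}$ yields $S = 0$. Combining this with the preceding dimension count gives exactly \eqref{equ:result-dim}, completing the proof.
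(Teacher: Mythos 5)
Your proposal is correct and follows essentially the same route as the paper, which also computes $\dim(\mathcal{H}_0)$ by identifying it with the space of $(n+1)\times(n+1)$ skew-symmetric matrices via the map $S \mapsto J_{2n}^T X_L^{\bot} S (X_L^{\bot})^T$. The only difference is that you explicitly verify injectivity (using the orthonormality of the columns of $X_L^{\bot}$), a detail the paper asserts without proof.
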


\begin{proof}
The space of all $(n+1) \times (n+1)$ skew-symmetric matrices is isomorphic to $\mathcal{H}_0$ via the isomorphism
$ S \mapsto J_{2n}^T X_L^{\bot}S(X_L^{\bot})^T$.
Therefore we have $\dim(\mathbb{HK}(\mathbf{B})) = \dim(\mathcal{H}_0) = n(n+1)/2$ which equals the dimension of the space of all real $(n+1) \times (n+1)$ skew-symmetric matrices.
\end{proof}

In the next section we identify special elements in $\mathbb{HK}(\mathbf{B})$ with respect to their norm and their eigenvalues.

\section{Special elements in $\mathbb{HK}(\mathbf{B})$}
\label{sec:minimum-norm}

Let $\mathcal{L} \subset \mathbb{R}^{2n}$ be some Lagrangian subspace and $\mathbf{B}=(x_1, \ldots , x_n)$ some basis of $\mathcal{L}$. Any matrix $H \in \mathbb{HK}(\mathbf{B})$ generates $\mathcal{L}$ as a Krylov subspace in the sense that
\begin{equation}  \mathcal{L} = \mathcal{K}_n(H,x_1) = \textnormal{span} \lbrace x_1, \ldots , x_n \rbrace, \quad x_k = H^{k-1}x_1, k=1, \ldots , n-1. \label{equ:min_norm_intro} \end{equation}
In this section we identify skew-Hamiltonian matrices $H \in \mathbb{R}^{2n \times 2n}$ that satisfy the relation \eqref{equ:min_norm_intro} with smallest Frobenius norm, smallest 2-norm and a given set of $n$ eigenvalues when $H$ is restricted to ${\mathcal{L}}$.

\subsection{Matrices in $\mathbb{HK}(\mathbf{B})$ with smallest Frobenius norm}
\label{subsec:minimum-norm1}

First we consider the task of minimizing the Frobenius norm on $\mathbb{HK}(\mathbf{B})$.

\begin{theorem} \label{thm:leastnorm}
Let $\mathcal{L} \subset \mathbb{R}^{2n}$ be a Lagrangian subspace and $\mathbf{B} = (x_1, \ldots , x_n)$ any ordered basis of $\mathcal{L}$. Furthermore, let $\widehat{H} \in \mathbb{R}^{2n \times 2n}$ be as defined in \eqref{equ:H_hat}.
 Then 
$$ \Vert \widehat{H} \Vert_F \leq \Vert H \Vert_F \qquad \forall \, H \in \mathbb{HK}(\mathbf{B}).$$
\end{theorem}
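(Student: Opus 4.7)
The plan is to combine Theorem \ref{thm:complete_HK} with Proposition \ref{prop:skew} to transport the minimization problem to the space of skew-symmetric matrices, and then to show that $\widehat{H}$ is Frobenius-orthogonal to the correction space $\mathcal{H}_0$. Any $H \in \mathbb{HK}(\mathbf{B})$ admits a decomposition $H = \widehat{H} + J_{2n}^T X_L^{\bot} S (X_L^{\bot})^T$ with $S = -S^T \in \mathbb{R}^{(n+1) \times (n+1)}$. Setting $\widehat{S} := J_{2n}\widehat{H}$, which is skew-symmetric by Proposition \ref{prop:skew}, and using that $J_{2n}$ is orthogonal, I obtain
$$\|H\|_F^2 = \|J_{2n}H\|_F^2 = \|\widehat{S} + X_L^{\bot} S (X_L^{\bot})^T\|_F^2,$$
so the proof reduces to comparing two skew-symmetric matrices in the Frobenius norm.

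Next I would expand the right-hand side. The orthonormality of the columns of $X_L^{\bot}$ immediately yields $\|X_L^{\bot} S (X_L^{\bot})^T\|_F^2 = \|S\|_F^2$, and a trace-cyclic rearrangement rewrites the cross term as a multiple of $\textnormal{tr}\bigl((X_L^{\bot})^T \widehat{S}^T X_L^{\bot} \cdot S\bigr)$. Hence it suffices to prove that the $(n+1) \times (n+1)$ block $(X_L^{\bot})^T \widehat{S} X_L^{\bot}$ is zero, which would give a Pythagorean identity $\|H\|_F^2 = \|\widehat{S}\|_F^2 + \|S\|_F^2 \geq \|\widehat{H}\|_F^2$, with equality precisely when $S = 0$.

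The central computation is that this block really vanishes. Setting $K := X_R X_L^+$ and using $J_{2n}^T = -J_{2n}$ together with $J_{2n}J_{2n}^T = I_{2n}$, a short calculation gives $\widehat{S} = J_{2n}K - (J_{2n}K)^T$. The decisive identity is then $X_L^+ X_L^{\bot} = 0$, which holds because the columns of $X_L^{\bot}$ are orthogonal to $\textnormal{range}(X_L)$ while $X_L^+ = (X_L^TX_L)^{-1}X_L^T$ factors through $X_L^T$. This identity simultaneously kills $J_{2n}K \cdot X_L^{\bot}$ and $(X_L^{\bot})^T \cdot (J_{2n}K)^T$, so $(X_L^{\bot})^T \widehat{S} X_L^{\bot} = 0$ as required.

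The main obstacle is not computational but conceptual: one has to recognize that the minimization over the affine subspace $\widehat{H} + \mathcal{H}_0$ trivializes once the problem is transported into the skew-symmetric world via Proposition \ref{prop:skew}, because the tangent directions $X_L^{\bot} S (X_L^{\bot})^T$ visibly live in a block orthogonal to the one supporting $\widehat{S}$. Once this structural observation is made, the whole argument reduces to the single identity $X_L^+ X_L^{\bot} = 0$ and a Pythagoras-type expansion.
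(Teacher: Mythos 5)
Your proof is correct and follows essentially the same route as the paper's: both start from the parametrization $H = \widehat{H} + J_{2n}^TX_L^{\bot}S(X_L^{\bot})^T$ of Theorem \ref{thm:complete_HK}, exploit the orthogonal invariance of the Frobenius norm after multiplying by $J_{2n}$, and reduce everything to the vanishing of $(X_L^{\bot})^TJ_{2n}\widehat{H}X_L^{\bot}$, which follows from $X_L^+X_L^{\bot}=0$. Your Pythagorean packaging of the last step is a mild streamlining of the paper's two-step argument (first identifying the minimizing block as $S=-(X_L^{\bot})^TJ_{2n}\widehat{H}X_L^{\bot}$, then checking that this matrix is zero), but the substance is identical.
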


\begin{proof}
Let $\widetilde{X}_L \in \mathbb{R}^{2n \times (n-1)}$ be a matrix whose orthonormal columns are a basis for $\textnormal{span} \lbrace x_1, \ldots , x_{n-1} \rbrace$. If $H \in \mathbb{HK}(\mathbf{B})$ is an arbitrary element, $\Vert H \Vert_F$ can be expressed as
\begin{equation}
\left\Vert \widehat{H} + J_{2n}^TX_L^{\bot}S(X_L^{\bot})^T \right\Vert_F = \left\Vert \widehat{H} + J_{2n}^T [ \; X_L^{\bot} \;\; \widetilde{X}_L \; ] \begin{bmatrix} S & 0 \\ 0 & 0 \end{bmatrix} [ \; X_L^{\bot} \;\; \widetilde{X}_L \; ]^T \right\Vert_F.
\end{equation}
As $J_{2n}$ and $[ \; X_L^{\bot} \; \widetilde{X}_L \; ]$ are orthogonal, multiplying by $J_{2n}$ and $[ \; X_L^{\bot} \; \widetilde{X}_L \; ]^T$ from the left and $[ \; X_L^{\bot} \;\; \widetilde{X}_L \; ]$ from the right gives
\begin{equation}
\begin{aligned} \Vert H \Vert_F &= \left\Vert [ \; X_L^{\bot} \;\; \widetilde{X}_L \; ]^T J_{2n} \widehat{H} [ \; X_L^{\bot} \;\; \widetilde{X}_L \; ] + \begin{bmatrix} S & 0 \\ 0 & 0 \end{bmatrix} \right\Vert_F \\ &= \left\Vert \begin{bmatrix} (X_L^{\bot})^T(J_{2n} \widehat{H})X_L^{\bot} & (X_L^{\bot})^T(J_{2n} \widehat{H})\widetilde{X}_L \\ \widetilde{X}_L^T(J_{2n} \widehat{H})X_L^{\bot}  & \widetilde{X}_L^T(J_{2n} \widehat{H}) \widetilde{X}_L \end{bmatrix} + \begin{bmatrix} S & 0 \\ 0 & 0 \end{bmatrix} \right\Vert_F
\end{aligned} \label{equ:minimizeS} \end{equation}
which is obviously minimized if $S = - (X_L^{\bot})^T(J_{2n} \widehat{H})X_L^{\bot}$. Note that $S$ is skew-symmetric. This yields $H = \widehat{H} - J_{2n}^TX_L^{\bot}((X_L^{\bot})^TJ_{2n} \widehat{H}X_L^{\bot})(X_L^{\bot})^T$ as the matrix in $\mathbb{HK}(\mathbf{B})$ with minimal Frobenius norm. Using the properties $X_L^+ = (X_L^TX_L)^{-1}X_L^T$ and $X_L^+X_L^{\bot} = 0$  a direct calculation shows that the matrix $X_L^{\bot}((X_L^{\bot})^TJ_{2n} \widehat{H}X_L^{\bot})(X_L^{\bot})^T$ is the zero matrix. 
Thus we obtain $H = \widehat{H}$ as the matrix with minimal Frobenius norm in $\mathbb{HK}(\mathbf{B})$. 
\end{proof}

In other words, Theorem \eqref{thm:leastnorm} states that
$ \argmin_{H \, \in \, \mathbb{HK}(\mathbf{B})} \Vert H \Vert_F = \widehat{H}$.
In Section \ref{subsec:minimum-norm2} we show that this remains to hold if $\Vert \cdot \Vert_F$ is replaced by $\Vert \cdot \Vert_2$.

\subsection{Matrices in $\mathbb{HK}(\mathbf{B})$ with smallest 2-norm}
\label{subsec:minimum-norm2}

Recall that for any $A \in \mathbb{R}^{n \times n}$ we have
$$ \Vert A \Vert_2 = \max_{x \neq 0} \frac{\Vert Ax \Vert_2}{\Vert x \Vert_2} = \max_{x \neq 0, \Vert x \Vert_2 = 1} \Vert Ax \Vert_2, \quad x \in \mathbb{R}^n.$$
Moreover, keep in mind that $\Vert A \Vert_2$ is also equal to $\sqrt{\lambda_{\max}}$, where $\lambda_{\max}$ denotes the largest eigenvalue of the symmetric (positive semidefinite) matrix $A^TA$ \cite[Sec.\,2.3.3]{GolubVanLoan}.
We start by considering the case where $(x_1, \ldots , x_n)$ is an ordered orthonormal basis of a Lagrangian subspace $\mathcal{L} \subset \mathbb{R}^{2n}$.

\begin{lemma} \label{thm:leastnorm1}
Let $\mathcal{L} \subset \mathbb{R}^{2n}$ be a Lagrangian subspace and $\mathbf{B} = (x_1, \ldots , x_n)$ any ordered orthonormal basis of $\mathcal{L}$. Furthermore, let $\widehat{H} \in \mathbb{R}^{2n \times 2n}$ be as defined in \eqref{equ:H_hat}. Then
$$ 1 = \Vert \widehat{H} \Vert_2 \leq \Vert H \Vert_2 \quad \forall \, H \in \mathbb{HK}(\mathbf{B}).$$
\end{lemma}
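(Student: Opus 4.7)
The statement has two parts: a lower bound $\Vert H \Vert_2 \geq 1$ valid on all of $\mathbb{HK}(\mathbf{B})$, and the claim that $\widehat{H}$ attains it. The lower bound is immediate: every $H \in \mathbb{HK}(\mathbf{B})$ satisfies $Hx_1 = x_2$, and both vectors are unit by orthonormality, so $\Vert H \Vert_2 \geq \Vert H x_1 \Vert_2 / \Vert x_1 \Vert_2 = 1$. The substantive content is therefore the upper bound $\Vert \widehat{H} \Vert_2 \leq 1$; proving this is where the orthonormality and the Lagrangian property combine.

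The approach I would take is to use $X_L^+ = X_L^T$ (from orthonormality) to write $\widehat{H} = K + K^\star$ with $K = X_R X_L^T$, and then estimate $\Vert \widehat{H} v \Vert_2$ by decomposing $\widehat{H} v = Kv + K^\star v$ into two \emph{orthogonal} pieces. Observe that $Kv \in \textnormal{range}(X_R) \subset \mathcal{L}$, whereas $K^\star v = J_{2n}^T X_L X_R^T J_{2n} v \in J_{2n}^T \textnormal{range}(X_L) \subset J_{2n}^T \mathcal{L}$. Because $\mathcal{L}$ is Lagrangian, for any $u,w \in \mathcal{L}$ one has $u^T(J_{2n}^T w) = -u^T J_{2n} w = -[u,w]_{J_{2n}} = 0$, hence $\mathcal{L} \perp J_{2n}^T \mathcal{L}$ (in the Euclidean sense). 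Thus $Kv \perp K^\star v$ and Pythagoras gives $\Vert \widehat{H} v \Vert_2^2 = \Vert Kv \Vert_2^2 + \Vert K^\star v \Vert_2^2$.

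Expanding each term and using the orthonormality of the columns of both $X_L$ and $X_R$ (so $X_L^T X_L = X_R^T X_R = I_{n-1}$), I obtain
\begin{equation*}
\Vert \widehat{H} v \Vert_2^2 = v^T X_L X_L^T v + v^T J_{2n}^T X_R X_R^T J_{2n} v = v^T \bigl( P_L + J_{2n}^T P_R J_{2n} \bigr) v,
\end{equation*}
where $P_L = X_L X_L^T$ and $P_R = X_R X_R^T$ are the orthogonal projections onto $\textnormal{span}\lbrace x_1,\ldots,x_{n-1}\rbrace$ and $\textnormal{span}\lbrace x_2,\ldots,x_n\rbrace$, respectively. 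Since $J_{2n}^T P_R J_{2n}$ is the orthogonal projection onto $J_{2n}^T \textnormal{range}(X_R)$, and since $\textnormal{range}(X_L) \subset \mathcal{L}$ is perpendicular to $J_{2n}^T \textnormal{range}(X_R) \subset J_{2n}^T \mathcal{L}$, the sum $P_L + J_{2n}^T P_R J_{2n}$ is itself an orthogonal projection (its square equals itself because the cross terms vanish by the orthogonality just noted). Consequently $\Vert \widehat{H} v \Vert_2^2 \leq \Vert v \Vert_2^2$, which gives $\Vert \widehat{H} \Vert_2 \leq 1$ and, combined with the lower bound, closes the argument.

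The main obstacle is recognising the orthogonal splitting $\widehat{H} v = Kv + K^\star v$ and exploiting $\mathcal{L} \perp J_{2n}^T \mathcal{L}$; once that is in place, everything reduces to an elementary projection computation. An alternative route via $\widehat{H}^T \widehat{H}$ is conceivable but messier, because the cross terms $K^T K^\star$ and $(K^\star)^T K$ do not vanish (only $K K^\star$ and $K^\star K$ do, by the earlier corollary), and one would still end up re-deriving the same orthogonality in disguise.
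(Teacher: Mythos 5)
Your proof is correct and is essentially the paper's argument in geometric dress: your identity $\Vert\widehat{H}v\Vert_2^2 = v^T\bigl(P_L + J_{2n}^TP_RJ_{2n}\bigr)v$ is exactly the paper's computation $\widehat{H}^T\widehat{H} = X_LX_L^T + J_{2n}^TX_RX_R^TJ_{2n}$ followed by the observation that this matrix is idempotent, hence an orthogonal projection; you additionally make explicit the lower bound $\Vert H\Vert_2 \geq 1$ via $Hx_1 = x_2$, which the paper leaves implicit. Only your closing aside is off: the cross terms $K^TK^\star$ and $(K^\star)^TK$ do in fact vanish as well (since $X_R^TJ_{2n}X_L = X_L^TJ_{2n}X_R = 0$), which is precisely how the paper arrives at the displayed form of $\widehat{H}^T\widehat{H}$, so that route is no messier than yours.
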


\begin{proof}
We use the fact that $\Vert \widehat{H} \Vert_2$ is equal to the largest eigenvalue of $\widehat{H}^T\widehat{H}$. From the construction of $\widehat{H}$, the properties $X_R^TX_R = X_L^TX_L = I_{n-1}$ (due to the orthogonality of the basis) and $X_R^TJ_{2n}X_L = X_L^TJ_{2n}X_R = 0$ we obtain
\begin{equation} \widehat{H}^T\widehat{H} = X_LX_L^T + J_{2n}^T X_R X_R^T J_{2n}. \label{equ:HTH} \end{equation}
Now a direct calculation shows that $(\widehat{H}^T \widehat{H})^2 = \widehat{H}^T \widehat{H}$. Therefore, $\widehat{H}^T \widehat{H}$ is a projection matrix that only has the eigenvalues zero and one (it projects onto the space $\textnormal{span} \lbrace x_1, \ldots , x_{n-1}, J_{2n}^Tx_2, \ldots , J_{2n}^Tx_n \rbrace$). Therefore it follows that $\Vert \widehat{H} \Vert_2 = 1$.
\end{proof}

The next lemma will be important to prove Theorem \ref{thm:leastnorm2} below. It relates the eigenvalues of $\widehat{H} = X_RX_L^+ + J^T(X_RX_L^+)^TJ$ to the eigenvalues of $K :=X_RX_L^+$.

\begin{lemma} \label{thm:equalnorm}
Let $\mathcal{L} \subset \mathbb{R}^{2n}$ be a Lagrangian subspace and $\mathbf{B} = (x_1, \ldots , x_n)$ any ordered basis of $\mathcal{L}$. Furthermore, let $\widehat{H} \in \mathbb{R}^{2n \times 2n}$ be as defined in \eqref{equ:H_hat} and $K = X_RX_L^+$. Then we have $$ \Vert \widehat{H} \Vert_2 = \Vert K \Vert_2.$$
\end{lemma}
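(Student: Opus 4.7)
The plan is to prove the equality by computing $\widehat{H}^T\widehat{H}$ explicitly and showing that $\Vert \widehat{H}\Vert_2^2 = \lambda_{\max}(\widehat{H}^T\widehat{H})$ coincides with $\lambda_{\max}(K^TK) = \Vert K\Vert_2^2$. First I would expand $\widehat{H}^T\widehat{H} = (K+K^\star)^T(K+K^\star)$ into four terms and claim that the two cross terms $K^T K^\star$ and $(K^\star)^T K$ vanish. This should follow directly from the Lagrangian identity $X_L^T J_{2n} X_R = 0$ (each entry is $x_i^T J_{2n} x_{j+1}$ with both vectors in $\mathcal{L}$): after substituting $K^T = X_L(X_L^T X_L)^{-1} X_R^T$ and $(K^\star)^T = J_{2n}^T K J_{2n}$, both cross products pick up a factor of $X_L^T J_{2n} X_R$ (or its transpose) in the middle and collapse to zero.

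That would leave the clean identity $\widehat{H}^T\widehat{H} = K^TK + (K^\star)^T K^\star$. Note that $(K^\star)^T K^\star = J_{2n}^T K K^T J_{2n}$ is orthogonally similar to $K K^T$, so $\lambda_{\max}((K^\star)^T K^\star) = \Vert K\Vert_2^2$, and thus each summand individually has spectral norm $\Vert K\Vert_2$. The remaining task is to rule out constructive interference in the sum.

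For this I would exploit the structure of the ranges. Namely, $\textnormal{range}(K^T) \subseteq \textnormal{range}(X_L)$, while $\textnormal{range}((K^\star)^T) = J_{2n}^T\,\textnormal{range}(K) \subseteq J_{2n}^T\,\textnormal{range}(X_R)$, and these two subspaces are mutually orthogonal: indeed, $(X_L a)^T(J_{2n}^T X_R b) = -a^T X_L^T J_{2n} X_R b = 0$ by the Lagrangian property once more. Because $K^TK$ and $(K^\star)^T K^\star$ are symmetric positive semidefinite, their nontrivial action takes place on these orthogonal ranges, so decomposing any unit vector along the two subspaces (and their joint orthogonal complement) should yield $v^T(K^TK + (K^\star)^T K^\star)v \leq \Vert K\Vert_2^2 \Vert v\Vert_2^2$. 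Equality is then attained on a right singular vector of $K$ realizing $\Vert K\Vert_2$: such a vector lies in $\textnormal{range}(K^T) \subseteq \textnormal{range}(X_L)$, which sits inside $\mathcal{L}$, hence is annihilated by $K^\star$, so $\widehat{H}v = Kv$ has norm $\Vert K\Vert_2\Vert v\Vert_2$.

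The most delicate point will be verifying the range-orthogonality cleanly and combining it with the standard fact that the top eigenvalue of a sum of two symmetric positive semidefinite matrices with orthogonal ranges equals the maximum of the two individual top eigenvalues. Once the cross-term cancellation and the range-orthogonality are in hand, the asserted identity $\Vert \widehat{H}\Vert_2 = \Vert K\Vert_2$ drops out essentially by inspection.
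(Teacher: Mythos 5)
Your proposal is correct, and it reaches the conclusion by a cleaner route than the paper, although it starts from the same structural fact. Both arguments rest on splitting the Gram matrix of $\widehat{H}$ into two positive semidefinite pieces coming from $K$ and $K^\star$ (the paper works with $\widehat{H}\widehat{H}^T = KK^T + J_{2n}^T(K^TK)J_{2n}$, you with the transposed identity $\widehat{H}^T\widehat{H} = K^TK + J_{2n}^TKK^TJ_{2n}$; the cross terms vanish for the same reason, $X_L^TJ_{2n}X_R = 0$). Where you diverge is in extracting the norm from this sum. The paper only records that the relevant ranges, $\textnormal{range}(X_R)$ and $\textnormal{range}(J_{2n}^TX_L)$, intersect trivially, and then carries out an eigenvector case analysis: it writes an eigenvector of $\widehat{H}\widehat{H}^T$ as $Xy + J_{2n}^TXw$, treats the cases $y=0$, $w=0$ and the mixed case separately, and argues that each forces the eigenvalue to be an eigenvalue of $KK^T$. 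You instead observe that the two ranges are actually \emph{orthogonal} (again by $X_L^TJ_{2n}X_R=0$), so the two symmetric summands annihilate each other's ranges and the sum is block-diagonal with respect to the decomposition $\textnormal{range}(X_L)\oplus J_{2n}^T\textnormal{range}(X_R)\oplus(\cdots)^{\bot}$; the top eigenvalue of the sum is then the maximum of the two individual top eigenvalues, both equal to $\Vert K\Vert_2^2$. This buys you a shorter proof with no case distinctions, and in fact a slightly stronger conclusion: every nonzero singular value of $K$ occurs twice among the singular values of $\widehat{H}$, not merely the largest one. Your closing remark that equality is attained at a top right singular vector of $K$ (which lies in $\textnormal{range}(X_L)$ and is therefore annihilated by $K^\star$) also gives an independent, direct verification of $\Vert\widehat{H}\Vert_2\geq\Vert K\Vert_2$, which is essentially the mechanism the paper reuses later in the proof of Theorem \ref{thm:leastnorm2}.
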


\begin{proof}
We now use the fact that $\Vert \widehat{H} \Vert_2 = \sqrt{\lambda_{\max}}$ where $\lambda_{\max}$ denotes the largest eigenvalue of $\widehat{H} \widehat{H}^T$. Similarly, $\Vert K \Vert_2$ is equal to $\sqrt{\mu_{\max}}$ for the largest eigenvalue  $\mu_{\max}$ of $KK^T$. Moreover, keep in mind that $X_L^+ = (X_L^TX_L)^{-1}X_L^T$.

Now we obtain
$$KK^T = X_R \left( X_L^TX_L \right)^{-1}X_L^TX_L \left( X_L^TX_L \right)^{-1} X_R^T = X_R \left( X_L^TX_L \right)^{-1}X_R^T$$
and, computing $\widehat{H} \widehat{H}^T$ using the relations $X_L^TJ_{2n}X_R=0$ and $X_R^TJ_{2n}X_L=0$,
$$ \widehat{H} \widehat{H}^T = KK^T + J_{2n}^T \left( K^TK \right) J_{2n}.$$

Now we have a closer look at $KK^T = X_R ( X_L^TX_L )^{-1}X_R^T$. First note that $\textnormal{range}(KK^T) \subset \textnormal{range}(X_R)$ holds. Thus, if $0 \neq \mu \in \mathbb{C}$ is an eigenvalue of $KK^T$ with eigenvector $0 \neq z \in \mathbb{R}^{2n}$, then $z \in \textnormal{range}(X_R) = \textnormal{span} \lbrace x_2, \ldots , x_n \rbrace$. For $z$ and $\widehat{H} \widehat{H}^T$ we obtain
$$ \begin{aligned} \big( \widehat{H} \widehat{H}^T \big) z &= KK^Tz + J_{2n}^T X_L \left( X_L^T X_L \right)^{-1}X_R^TX_R \left( X_L^TX_L \right)^{-1}X_L^TJ_{2n} z \\
&= KK^Tz = \mu z
\end{aligned}$$
since $X_L^TJ_{2n}z = 0$. Therefore, any nonzero eigenvalue of $KK^T$ is also an eigenvalue of $\widehat{H} \widehat{H}^T$. Now we show that any eigenvalue $\lambda \neq 0$ of $\widehat{H} \widehat{H}^T$ is also an eigenvalue of $KK^T$.

First, set $X := [ \, x_1 \; \cdots \; x_n \, ]$. As any $z_1,z_2 \in \mathbb{R}^n$ with the property $X z_1 = J_{2n}^TXz_2$ are necessarily zero (multiplying the equation from the left by $X^+$ gives $z_1=0$ which implies $z_2=0$), the vectors  $x_1, \ldots , x_n, J_{2n}^Tx_1, \ldots , J_{2n}^Tx_n$ are linearly independent. In particular, they form a basis of $\mathbb{R}^{2n}$. Accordingly, if $0 \neq \lambda \in \mathbb{R}$ is an eigenvalue of $\widehat{H} \widehat{H}^T$ with  eigenvector $0 \neq z \in \mathbb{R}^{2n}$, then
\begin{equation} z = Xy + J_{2n}^TXw \label{equ:proof-z} \end{equation}
for some vectors $y=[ \, \zeta_1 \; \cdots \; \zeta_n \, ]^T, w = [\, \omega_1 \; \cdots \; \omega_n \, ]^T \in \mathbb{R}^n$. We consider two special cases first:
\begin{itemize}
\item[(a)] First assume $y = 0$, i.e.\ $z = J_{2n}^TXw$. Then
$$\lambda z = \widehat{H} \widehat{H}^Tz = \left( KK^T + J_{2n}^T \left( K^TK \right) J_{2n}\right) z = \left( J_{2n}^T \left( K^TK \right) J_{2n}\right) z$$
since $ KK^Tz = X_R ( X_L^TX_L )^{-1}X_R^TJ_{2n}^TXw = 0$ because $X_R^TJ_{2n}X = 0$. Therefore, $(\lambda, z)$ is an eigenpair of $J_{2n}^T (K^TK) J_{2n}$. As $KK^T$ and $K^TK$ have the same nonzero eigenvalues and since $J_{2n}^T (K^TK) J_{2n}$ is a similarity transformation of $K^TK$, this proves that $\lambda$ is an eigenvalue of $KK^T$.
\item[(b)] Now assume $w = 0$, i.e.\ $z=Xy$. Then
$$\lambda z = \widehat{H} \widehat{H}^Tz =  \left( KK^T + J_{2n}^T \left( K^TK \right) J_{2n}\right) z = KK^Tz$$
since $J_{2n}^T X_L \big( X_L^T X_L \big)^{-1}X_R^TX_R \big(X_L^TX_L \big)^{-1}X_L^TJ_{2n} Xz=0$ (which follows because $X_L^TJ_{2n}X=0$).  Thus, $\lambda$ is also an eigenvalue of $KK^T$.
\end{itemize}
Now consider the case $y \neq 0$ and $w \neq 0$ for $z$ in \eqref{equ:proof-z}. It follows directly from (a) and (b) above that
\begin{equation} \lambda z = \widehat{H} \widehat{H}^Tz = KK^TXy + \left( J_{2n}^T \left( K^TK \right) J_{2n}\right) J_{2n}^TXw. \label{equ:eig-equ} \end{equation}
Recall that $\textnormal{range}(KK^T) \subset \textnormal{span} \lbrace x_2, \ldots , x_n \rbrace$ while $\textnormal{range}(J_{2n}^T(K^TK)J_{2n}) \subset \textnormal{span} \lbrace J_{2n}^Tx_1, \ldots , J_{2n}^Tx_{n-1} \rbrace$ since
\begin{equation} K^TK = X_L \left( X_L^TX_L \right)^{-1}X_R^TX_R \left( X_L^TX_L \right)^{-1} X_L^T. \label{range_KTK} \end{equation}
In particular, $\lbrace x_1, J_{2n}^Tx_n \rbrace \notin \textnormal{range}(\widehat{H} \widehat{H}^T)$. Thus, for \eqref{equ:eig-equ} to hold it necessarily follows that $\zeta_1 = 0$ and $\omega_n = 0$. Consequently, $Xy \in \textnormal{range}(X_R)$ while $J_{2n}^TXw \in \textnormal{range}(J_{2n}^TX_L)$. Since
$$ \textnormal{range}(X_R) \cap \textnormal{range}(J_{2n}^TX_L) = \lbrace 0 \rbrace$$
it follows again from \eqref{equ:eig-equ} that
\begin{align*}
\lambda Xy = \left(KK^T \right) Xy \quad \textnormal{and} \quad \lambda J_{2n}^TXw = \left( J_{2n}^T K^TK J_{2n} \right) J_{2n}^TXw
\end{align*}
have to hold. These relations both imply that $\lambda$ is an eigenvalue of $KK^T$. In conclusion, any nonzero eigenvalue of $\widehat{H} \widehat{H}^T$ is also an eigenvalue of $KK^T$ and so the nonzero eigenvalues of $KK^T$ and $\widehat{H} \widehat{H}^T$ coincide. In particular, $\lambda_{\max} = \mu_{\max}$ and so $\Vert \widehat{H} \Vert_2 = \Vert K \Vert_2$ follows.
\end{proof}

Before we state the analogous result to Theorem \ref{thm:leastnorm} for the $\Vert \cdot \Vert_2$-norm, we need some observations. To this end, let $K = X_RX_L^+ = X_R (X_L^TX_L)^{-1}X_L^T$ for some ordered basis $(x_1, \ldots , x_n)$ of a given Lagrangian subspace $\mathcal{L} \subset \mathbb{R}^{2n}$ as before (using the definitions from \eqref{equ:XLXR}). First notice that
\begin{equation}
\Vert K \Vert_2^2 = \max_{z \in \mathbb{R}^{2n}, \, \Vert z \Vert_2 = 1} \Vert Kz \Vert_2^2 = \max_{z \in \mathbb{R}^{2n}, \, \Vert z \Vert_2 = 1} z^TK^TKz.
\label{equ:courantfischer}
\end{equation}
As a consequence of the Courant-Fischer-Theorem, the maximum on the right-hand-side of \eqref{equ:courantfischer} is attained for $\tilde{z} \in \mathbb{R}^{2n}$ if $\tilde{z}$ (with $\Vert \tilde{z} \Vert_2 = 1$) is an eigenvector for $K^TK$ for its largest eigenvalue $\mu_{\max} > 0$. Then $\Vert K \Vert_2 = \Vert K \tilde{z} \Vert_2$. From $(K^TK) \tilde{z} = \mu_{\max} \tilde{z}$ it trivially follows that $\tilde{z} \in \textnormal{range}(K^TK)$. Moreover
$\textnormal{range}(K^TK) \subset \textnormal{range}(X_L)$ holds as can be seen from \eqref{range_KTK}. Thus, $\tilde{z} \in \textnormal{range}(X_L)$ and we conclude that
\begin{equation}
\Vert K \Vert_2 = \max_{z \in \mathbb{R}^{2n}, \, \Vert z \Vert_2 = 1} \Vert Kz \Vert_2 = \max_{z \in \textnormal{range}(X_L), \, \Vert z \Vert_2 = 1} \Vert Kz \Vert_2.
\label{equ:max-range}
\end{equation}

With \eqref{equ:max-range} at hand we may now easily prove the following theorem.

\begin{theorem} \label{thm:leastnorm2}
Let $\mathcal{L} \subset \mathbb{R}^{2n}$ be a Lagrangian subspace and $\mathbf{B} = (x_1, \ldots , x_n)$ some ordered basis of $\mathcal{L}$. Furthermore, let $\widehat{H} \in \mathbb{R}^{2n \times 2n}$ be as defined in \eqref{equ:H_hat}. Then
$$ \Vert \widehat{H} \Vert_2 \leq \Vert H \Vert_2 \qquad \forall \, H \in \mathbb{HK}(\mathbf{B}).$$
\end{theorem}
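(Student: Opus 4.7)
The plan is to exploit two facts already in hand: (i) every $H\in\mathbb{HK}(\mathbf{B})$ agrees with $K=X_RX_L^+$ as a linear map on the subspace $\textnormal{range}(X_L)$, and (ii) by \eqref{equ:max-range}, the operator $2$-norm of $K$ is already attained on this very subspace.

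First I would rewrite the defining condition $Hx_k=x_{k+1}$ for $k=1,\dots,n-1$ as the single matrix identity $HX_L=X_R$. Since $x_1,\dots,x_{n-1}$ are linearly independent, $X_L$ has full column rank, so $X_L^+X_L=I_{n-1}$. Consequently, for any $z\in\textnormal{range}(X_L)$, writing $z=X_Lw$, we get
$$Hz=HX_Lw=X_Rw=X_RX_L^+X_Lw=Kz.$$
Thus the restriction $H|_{\textnormal{range}(X_L)}$ is the same operator (namely $K|_{\textnormal{range}(X_L)}$) for every $H\in\mathbb{HK}(\mathbf{B})$.

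Next I would chain this observation with the standard variational definition of the operator $2$-norm. For any $H\in\mathbb{HK}(\mathbf{B})$,
$$\Vert H\Vert_2=\max_{\Vert z\Vert_2=1}\Vert Hz\Vert_2 \;\geq\; \max_{\substack{z\in\textnormal{range}(X_L)\\ \Vert z\Vert_2=1}}\Vert Hz\Vert_2 = \max_{\substack{z\in\textnormal{range}(X_L)\\ \Vert z\Vert_2=1}}\Vert Kz\Vert_2.$$
By \eqref{equ:max-range} the right-hand side equals $\Vert K\Vert_2$, and by Lemma \ref{thm:equalnorm} we have $\Vert K\Vert_2=\Vert \widehat{H}\Vert_2$. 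Hence $\Vert\widehat{H}\Vert_2\leq\Vert H\Vert_2$, as claimed.

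There is really no serious obstacle here — the theorem is an immediate corollary of the earlier results, once one notes that the constraints defining $\mathbb{HK}(\mathbf{B})$ already fix the action on $\textnormal{range}(X_L)$, and that \eqref{equ:max-range} says $\Vert K\Vert_2$ is a ``one-sided'' norm witnessed on precisely this subspace. The only thing to be careful about is invoking $X_L^+X_L=I_{n-1}$, which requires the linear independence of $x_1,\dots,x_{n-1}$ — but this is guaranteed because $(x_1,\dots,x_n)$ is a basis of $\mathcal{L}$.
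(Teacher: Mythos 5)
Your proof is correct and follows essentially the same route as the paper's: both arguments show that every $H \in \mathbb{HK}(\mathbf{B})$ agrees with $K$ on $\textnormal{range}(X_L)$, then combine \eqref{equ:max-range} with Lemma \ref{thm:equalnorm} to conclude $\Vert \widehat{H} \Vert_2 = \Vert K \Vert_2 \leq \Vert H \Vert_2$. The only cosmetic difference is that you derive the agreement on $\textnormal{range}(X_L)$ directly from $HX_L = X_R$, whereas the paper writes $H = \widehat{H} - \Delta H$ and uses $K^\star x_k = 0$ and $(\Delta H)x_k = 0$; both are valid.
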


\begin{proof}
Let $H = \widehat{H} - \Delta H \in \mathbb{HK}(\mathbf{B})$ with $\Delta H \in \mathcal{H}_0$ and $\widehat{H} = K + K^\star$ as in \eqref{equ:H_hat}. Then, as $K^\star x_k = 0$  and $(\Delta H)x_k = 0$ hold for all $k=1, \ldots , n-1$, it follows that $H = \widehat{H}-\Delta H$, $\widehat{H}$ and $K$ all behave exactly identically on the subspace $\textnormal{span} \lbrace x_1, \ldots , x_{n-1} \rbrace = \textnormal{range}(X_L)$. That means, for any $y \in \textnormal{range}(X_L)$ we have $Hy = \widehat{H}y = Ky$. Using the result from Lemma \ref{thm:equalnorm} along with the observation in \eqref{equ:max-range} we can estimate
\begin{align*}
\Vert \widehat{H} \Vert_2 &= \Vert K \Vert_2 = \max_{z \in \textnormal{range}(X_L), \, \Vert z \Vert_2 = 1} \Vert Kz \Vert_2 = \max_{z \in \textnormal{range}(X_L), \, \Vert z \Vert_2 = 1} \Vert Hz \Vert_2 \\ &\leq \max_{z \in \mathbb{R}^{2n}, \, \Vert z \Vert_2 = 1} \Vert Hz \Vert_2 = \Vert H \Vert_2.
\end{align*}
Thus, the lower bound $\Vert \widehat{H} \Vert_2 \leq \Vert H \Vert_2$ holds for all skew-Hamiltonian matrices $H \in \mathbb{R}^{2n \times 2n}$ from $\mathbb{HK}(\mathbf{B})$.
\end{proof}

\subsection{Matrices in $\mathbb{HK}(\mathbf{B})$ with prescribed eigenvalues of $H|_{\mathcal{L}}$}

Let $\Lambda = \lbrace \lambda_1, \ldots , \lambda_n \rbrace \subset \mathbb{C}$ be a given set of $n$ scalars $\lambda_k \in \mathbb{C}$ closed under conjugation. Corresponding to $\Lambda$ we define
\begin{equation} p(x) := \prod_{k=1}^n (x- \lambda_k) = x^n + a_{n-1}x^{n-1} + \cdots + a_1x + a_0 \in \mathbb{R}[x]. \label{equ:polynomial} \end{equation}
Let $\mathcal{L} \subset \mathbb{R}^{2n}$ be a Lagrangian subspace with basis $\mathbf{B}=(x_1, \ldots , x_n)$ and let $X = [ \, x_1 \; \cdots \; x_n \, ] \in \mathbb{R}^{2n \times n}$. If $H \in \mathbb{HK}(\mathbf{B})$, then the eigenvalues of $H|_{\mathcal{L}}$ are $\lambda_1, \ldots , \lambda_n$ if and only if
\begin{equation} H \begin{bmatrix} x_1 & \cdots & x_n \end{bmatrix} = \begin{bmatrix} x_1 & \cdots & x_n \end{bmatrix} \begin{bmatrix} 0 & \cdots & 0 & -a_0 \\ 1 & & \vdots & -a_1 \\ & \ddots & 0 & \vdots \\ 0 & & 1 & -a_{n-1} \end{bmatrix} =: XC. \label{equ:companion_B} \end{equation}
The matrix $C \in \mathbb{R}^{n \times n}$ is called the companion matrix for $p(x)$. Next, consider the skew-Hamiltonian matrix  $\widetilde{H} := XCX^+ + J_{2n}^T(XCX^+)^TJ_{2n}$.
A direct calculation shows that
$$ \widetilde{H}X = XC(X^+X) + J_{2n}^T(X^+)^TC^T(X^TJ_{2n}X) = XC,$$
so the eigenvalues of $H$ restricted to $\mathcal{L}$ are exactly $\lambda_1, \ldots , \lambda_n$. Moreover, $\widetilde{H} \in \mathbb{HK}(\mathbf{B})$ since
$$ \begin{aligned} \widetilde{H}x_k &= (XCX^+)x_k + J_{2n}^T(X^+)^TC^TX^TJ_{2n}x_k \\ &= (XCX^+)x_k = XCe_k = Xe_{k+1} = x_{k+1} \end{aligned} $$
holds for all $k=1, \ldots , n-1$. We obtain the following theorem characterizing all matrices in $\mathbb{HK}(x_1, \ldots , x_n)$ that give a prescribed set of eigenvalue when restricted to the Lagrangian subspace $\mathcal{L}= \textnormal{span}\lbrace x_1, \ldots , x_n \rbrace$.

\begin{theorem} \label{thm:eigenvalues}
Let $\mathcal{L} \subset \mathbb{R}^{2n}$ be a Lagrangian subspace and $\mathbf{B} = (x_1, \ldots , x_n)$ some ordered basis of $\mathcal{L}$. Furthermore, let $\lbrace \lambda_1, \ldots , \lambda_n \rbrace \subset \mathbb{C}$ be a set of $n$ complex scalars that is closed under conjugation. Let $p(x) \in \mathbb{R}[x]$ be defined as in \eqref{equ:polynomial}
and suppose $C \in \mathbb{R}^{n \times n}$ as given in \eqref{equ:companion_B} is the companion matrix for $p(x)$. Then any skew-Hamiltonian matrix $H \in \mathbb{HK}(\mathbf{B})$ such that the eigenvalues of $H|_{\mathcal{L}}$ are given by $\lambda_1, \ldots , \lambda_n$ can be expressed as
\begin{equation}  H = \widetilde{H} + J_{2n}^TX^{\bot}S(X^{\bot})^T \label{equ:H_eigvals} \end{equation}
where $\widetilde{H} = XCX^+ + J_{2n}^T(XCX^+)^TJ_{2n}\in \mathbb{R}^{2n \times 2n}$, $X^{\bot} \in \mathbb{R}^{2n \times n}$ is a matrix whose columns are an orthonormal basis for $\mathcal{L}^{\bot}$ and $S \in \mathbb{R}^{n \times n}$ is some skew-symmetric matrix.
\end{theorem}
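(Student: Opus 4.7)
The plan is to mirror the structure of the proof of Theorem \ref{thm:complete_HK}, establishing the characterization in both directions. The forward direction---verifying that any $H$ of the form \eqref{equ:H_eigvals} meets all three requirements (being skew-Hamiltonian, lying in $\mathbb{HK}(\mathbf{B})$, and having the prescribed eigenvalues on $\mathcal{L}$)---is essentially computational, and the bulk of it is already contained in the paragraph preceding the theorem where $\widetilde{H}$ is constructed. The key observation is that $(X^{\bot})^Tx_k = 0$ for every $k=1,\ldots,n$, so the perturbation $J_{2n}^TX^{\bot}S(X^{\bot})^T$ annihilates all of $\mathcal{L}$; this simultaneously ensures $Hx_k = \widetilde{H}x_k = x_{k+1}$ for $k=1,\ldots,n-1$ and $HX=\widetilde{H}X=XC$, so both membership in $\mathbb{HK}(\mathbf{B})$ and the spectral condition are inherited from $\widetilde{H}$. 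Skew-Hamiltonicity of $H$ follows from the fact that $X^{\bot}S(X^{\bot})^T$ is skew-symmetric whenever $S$ is, so $J_{2n}^TX^{\bot}S(X^{\bot})^T$ is skew-Hamiltonian by Proposition \ref{prop:skew}.

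For the converse, suppose $H \in \mathbb{HK}(\mathbf{B})$ satisfies $HX=XC$ and set $\Delta H := H - \widetilde{H}$. Then $\Delta H$ is skew-Hamiltonian and $(\Delta H)X = HX - \widetilde{H}X = 0$, so every column of $X$---hence all of $\mathcal{L}$---lies in the kernel of $\Delta H$. By Proposition \ref{prop:skew} there is a unique skew-symmetric $\widetilde{S}$ with $\Delta H = J_{2n}^T\widetilde{S}$, and $\widetilde{S}X=0$ as well. Because $\widetilde{S}$ is skew-symmetric, $\ker(\widetilde{S}) = \textnormal{range}(\widetilde{S})^{\bot}$, so $\mathcal{L}\subseteq\ker(\widetilde{S})$ forces $\textnormal{range}(\widetilde{S})\subseteq\mathcal{L}^{\bot} = \textnormal{range}(X^{\bot})$. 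Combining this with $\widetilde{S}|_{\mathcal{L}}=0$ one obtains $\widetilde{S} = X^{\bot}S(X^{\bot})^T$ for a unique $S \in \mathbb{R}^{n \times n}$, and multiplying $\widetilde{S}^T = -\widetilde{S}$ on the left by $(X^{\bot})^T$ and on the right by $X^{\bot}$ shows that $S$ must be skew-symmetric.

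An equivalent and slightly more explicit route---closely matching the argument in Theorem \ref{thm:complete_HK}---is to extend $X^{\bot}$ by any orthonormal basis $\widetilde{X}\in\mathbb{R}^{2n\times n}$ of $\mathcal{L}$ so that $Z := [\,X^{\bot}\ \ \widetilde{X}\,]$ is orthogonal. Since each column of $\widetilde{X}$ lies in $\mathcal{L}\subseteq\ker\widetilde{S}$, the matrix $Z^T\widetilde{S}Z$ has a $2\times 2$ block form whose only nonzero entry is the top-left block $S := (X^{\bot})^T\widetilde{S}X^{\bot}$, which is skew-symmetric. Conjugating back by $Z$ yields $\widetilde{S} = X^{\bot}S(X^{\bot})^T$ and consequently $\Delta H = J_{2n}^TX^{\bot}S(X^{\bot})^T$, completing the proof.

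The argument poses no serious obstacle beyond careful dimensional bookkeeping: the spectral constraint $HX=XC$ now pins down the action of $H$ on the full $n$-dimensional subspace $\mathcal{L}$, rather than just on the $(n-1)$-dimensional subspace $\textnormal{span}\{x_1,\ldots,x_{n-1}\}$ as in Theorem \ref{thm:complete_HK}. This shrinks the parameter space from skew-symmetric matrices of size $(n+1)\times(n+1)$ to skew-symmetric matrices of size $n\times n$, a loss of $n$ degrees of freedom that reflects exactly the $n$ additional scalar constraints imposed by prescribing the eigenvalues of $H|_{\mathcal{L}}$.
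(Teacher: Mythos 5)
Your proof is correct and follows essentially the same route as the paper: verify the forward direction from $HX=XC$, and for the converse reduce to showing that a skew-Hamiltonian matrix annihilating $\mathcal{L}$ must have the form $J_{2n}^TX^{\bot}S(X^{\bot})^T$ with $S$ skew-symmetric, exactly as in the block-diagonalization argument of Theorem~\ref{thm:complete_HK} (which the paper simply cites at this point). Your additional observation that $\ker(\widetilde{S})=\textnormal{range}(\widetilde{S})^{\bot}$ for skew-symmetric $\widetilde{S}$ gives a slightly more conceptual justification of the same step, but the substance is identical.
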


\begin{proof}
Let $X = [\, x_1 \; \cdots \; x_n \, ]$. It is clear that any matrix $H$ of the form \eqref{equ:H_eigvals} satisfies $HX = XC$. Therefore, $H \in \mathbb{HK}(\mathbf{B})$ and the eigenvalues of $H|_{\mathcal{L}}$ coincide with the eigenvalues of $C$ which are $\lambda_1, \ldots , \lambda_n$. Now suppose $H \in \mathbb{R}^{2n \times 2n}$ is skew-Hamiltonian and satisfies $HX = XC$. Then $(\widetilde{H}-H)X=0$, so $\widetilde{H}-H$ is a skew-Hamiltonian matrix with $\mathcal{L} \subset \textnormal{null}(\widetilde{H}-H)$. With the same reasoning as used for the proof of Theorem \ref{thm:leastnorm} we find that any skew-Hamiltonian matrix with this property can be expressed as $J_{2n}^TX^{\bot}S(X^{\bot})^T$ where the columns of $X^{\bot}$ form an orthonormal basis of $\mathcal{L}^{\bot}$ and $S=-S^T \in \mathbb{R}^{n \times n}$ is a suitable skew-symmetric matrix. Thus $\widetilde{H}-H = J_{2n}^TX^{\bot}S(X^{\bot})^T$ for a suitable $S$ implies that $H$ has the form in \eqref{equ:H_eigvals} and completes the proof.
\end{proof}

\section{Applications}
\label{sec:applications}

In this section we present a possible application for the results obtained in the previous section.
First, suppose $A \in \mathbb{R}^{2n \times 2n}$ is a (slightly) perturbed skew-Hamiltonian matrix, that is, $A \neq A^\star$ but $\Vert A - A^\star \Vert_F$ is small. Moreover, let $x_1 \in \mathbb{R}^{2n}$ be some fixed vector. Now consider $A$ and its Krylov sequence
$$Ax_k = x_{k+1}, \quad k=1, \ldots , n-1.$$
Let $\widetilde{\mathcal{L}} = \mathcal{K}_n(A,x_1)$ be the corresponding Krylov space. Since $A$ is not skew-Hamiltonian, $\widetilde{\mathcal{L}}$ will in general neither be isotropic nor $A$-invariant as it would have been the case if $A$ were skew-Hamiltonian. Assume that $\widetilde{\mathcal{L}}$ is $n$-dimensional. Since $\Vert A-A^\star \Vert_F$ was assumed to be small, it is a reasonable assumption that the distance\footnote{By distance we mean the gap between $\mathcal{L}$ and $\widetilde{\mathcal{L}}$ measured as $\Vert P_{\mathcal{L}} - P_{\widetilde{\mathcal{L}}} \Vert_2$ where $P_{\mathcal{L}}$ and  $P_{\widetilde{\mathcal{L}}}$ are the orthogonal projectors onto these subspaces.} between $\widetilde{\mathcal{L}}$ and a true Lagrangian subspace $\mathcal{L}$ is not too large.
We may find an approximation of $\mathcal{L}$ and a basis $\mathbf{B}=(y_1, \ldots , y_n)$ of $\mathcal{L}$ by the following easy procedure: \\[0.5cm]
Set $y_1 = x_1$ and $Y = [ \, y_1 \, ]$. \\
For each $k=2, \ldots , n$ \\
\hspace*{1cm} Compute the orthogonal projection $y_k$ of $x_k$ onto the nullspace of \\ \hspace*{1cm} $Y^TJ_{2n}$. \\
\hspace*{1cm} Set $Y = [\, Y \; y_k \, ]$. \\
End \\

Suppose that $y_1, \ldots , y_n \in \mathbb{R}^{2n}$ are linearly independent. In this case, they span a Lagrangian subspace.
Using $\mathbf{B}=(y_1, \ldots , y_n)$ as an approximation for the Krylov sequence of the unperturbed matrix we may now ask for a skew-Hamiltonian matrix $H \in \mathbb{R}^{2n \times 2n}$ with $Hy_k = y_{k+1},$ $k=1, \ldots , n-1$, as close as possible to $A$. The same approach as in the proof of Theorem \ref{thm:leastnorm} gives the following result.

\begin{lemma} \label{lem:bestapprox1}
Let $A \in \mathbb{R}^{2n \times 2n}$ and let $\mathbf{B}=(x_1, \ldots , x_n)$ be the basis of some Lagrangian subspace $\mathcal{L} \subset \mathbb{R}^{2n}$. Then
$$ \Vert H - A \Vert_F \leq \Vert H' - A \Vert_F \quad \forall \; H' \in \mathbb{HK}(\mathbf{B})$$
if $H$ is of the form \eqref{equ:char_HK} with
\begin{equation} S = - \frac{1}{2} \left( (X_L^{\bot})^TJ_{2n}(\widehat{H}-A)X_L^{\bot} + (X_L^{\bot})^T(\widehat{H}-A)^TJ_{2n}X_L^{\bot} \right). \label{equ:best_S1} \end{equation}
\end{lemma}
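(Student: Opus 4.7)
By Theorem~\ref{thm:complete_HK} every $H' \in \mathbb{HK}(\mathbf{B})$ has the unique parametrization
$H' = \widehat{H} + J_{2n}^T X_L^{\bot} S' (X_L^{\bot})^T$
with $S' = -(S')^T \in \mathbb{R}^{(n+1) \times (n+1)}$. The problem thus becomes to minimize
$$F(S') := \left\Vert (\widehat{H} - A) + J_{2n}^T X_L^{\bot} S' (X_L^{\bot})^T \right\Vert_F^2$$
over skew-symmetric $S'$; what remains is to compute this minimizer in closed form and verify that it equals the matrix in \eqref{equ:best_S1}.

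\textbf{Orthogonal reduction.} Following the strategy used in the proof of Theorem~\ref{thm:leastnorm}, I would choose an orthonormal basis $\widetilde{X}_L$ of $\textnormal{span}\{x_1,\ldots,x_{n-1}\}$ and form the orthogonal matrix $Z = [\,X_L^{\bot}\ \widetilde{X}_L\,]$. Multiplying by the orthogonal matrices $Z^TJ_{2n}$ from the left and $Z$ from the right preserves $\Vert\cdot\Vert_F$, and because $Z^TX_L^{\bot} = [\,I_{n+1};\,0\,]$ the perturbation term turns into $\mathrm{blockdiag}(S',0)$. Setting $M := (X_L^{\bot})^T J_{2n}(\widehat{H}-A)X_L^{\bot}$ and observing that the three other blocks of $Z^TJ_{2n}(\widehat{H}-A)Z$ do not depend on $S'$, one obtains
$$F(S') \;=\; \Vert S' + M\Vert_F^2 + C,$$
where $C$ is a constant. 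So the problem decouples to minimizing $\Vert S' + M\Vert_F^2$ over skew-symmetric $S'$.

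\textbf{Frobenius projection onto the skew-symmetric subspace.} Decomposing $M = M_s + M_a$ into its symmetric part $M_s = \tfrac12(M+M^T)$ and skew-symmetric part $M_a = \tfrac12(M-M^T)$, the orthogonality in the Frobenius inner product of symmetric and skew-symmetric matrices yields
$\Vert S' + M\Vert_F^2 = \Vert S' + M_a\Vert_F^2 + \Vert M_s\Vert_F^2$,
which is minimized precisely by $S' = -M_a = -\tfrac12(M - M^T)$. Substituting back and using $J_{2n}^T = -J_{2n}$ to rewrite
$$M^T = (X_L^{\bot})^T (\widehat{H}-A)^T J_{2n}^T X_L^{\bot} = -(X_L^{\bot})^T (\widehat{H}-A)^T J_{2n} X_L^{\bot}$$
gives exactly the expression in \eqref{equ:best_S1}, identifying $S = S'$ as the optimal choice.

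\textbf{Main care point.} The argument is essentially a least-squares projection and so contains no deep difficulty; however, the decisive step is the orthogonal-invariance reduction that isolates the $S'$-dependent $(1,1)$-block, since without it the minimization is obscured. A secondary point of care is the sign bookkeeping from $J_{2n}^T = -J_{2n}$, which is what transforms the natural expression $-\tfrac12(M-M^T)$ into the apparently $+$-signed combination appearing in \eqref{equ:best_S1}.
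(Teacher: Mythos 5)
Your proof is correct and follows essentially the same route as the paper: reduce by orthogonal invariance of $\Vert\cdot\Vert_F$ (via $J_{2n}$ and $Z=[\,X_L^{\bot}\ \widetilde{X}_L\,]$) to the $(1,1)$-block, then take the nearest skew-symmetric matrix, which the paper handles by citing Higham while you prove it directly through the symmetric/skew-symmetric orthogonal decomposition. Your careful sign bookkeeping (minimizing $\Vert S'+M\Vert_F$ rather than projecting $M$ itself) in fact lands on \eqref{equ:best_S1} more cleanly than the paper's intermediate formula \eqref{equ:best_S2}, which carries a sign slip that the paper's final answer silently corrects.
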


\begin{proof}
If $H \in \mathbb{HK}(\mathbf{B})$, then
$ \Vert H - A \Vert_F = \Vert (\widehat{H} - A) + J_{2n}^TX_L^{\bot} S (X_L^{\bot})^T \Vert_F$
which is to be minimized for $S$. Notice that the solution
\begin{equation} S' = (X_L^{\bot})^TJ_{2n}(\widehat{H}-A) X_L^{\bot}  \label{equ:best_S2} \end{equation}
we obtained for $A=0$ in the derivations \eqref{equ:minimizeS} is not valid here since $S' \neq -(S')^T$ might hold. As the other three blocks in \eqref{equ:minimizeS} are not effected by the choice of $S$, any matrix $S=-S^T$ that yields a best possible approximation in the Frobenius norm is obtained when $S$ is chosen as close as possible to the matrix $S'$ in \eqref{equ:best_S2}. According to \cite[Sec.\,2]{High89} one such solution is given by $S = (1/2)(S'-(S')^T)$ which is exactly the matrix in \eqref{equ:best_S1}.
\end{proof}

In order to test Lemma \ref{lem:bestapprox1} we first create an orthornormal basis $\mathbb{B}'=(x'_1, \ldots , x'_n)$ for a Lagrangian subspace $\mathcal{L}'\subset \mathbb{R}^{2n}$ with the isotropic Arnoldi algorithm from \cite{MehrWat}. With this basis and \eqref{equ:H_hat} a skew-Hamiltonian matrix $\hat{H}$ is build which realizes $\mathcal{K}_n(\hat{H},x'_1)$. We perturb this matrix by a normally distributed random matrix $E$ with magnitude $\beta$, e.g.,
\begin{align}
    A = \hat{H} + \beta E,\nonumber
\end{align}
where $\beta = 0.001$. The matrix $A$ is constructed that way to get a not too ill conditioned Krylov sequence which we calculate explicitly from $A$ and $x_1$ to obtain $\widetilde{\mathcal{L}} = \mathcal{K}_n(A,x_1)$. From $\widetilde{\mathcal{L}}$ we get $\mathcal{L}$ and $\mathbb{B}=(y_1,\ldots,y_n)$ by the above-mentioned procedure. We now use Lemma \ref{lem:bestapprox1} to construct the skew-Hamiltonian matrix $H\in \mathbb{R}^{2n \times 2n}$ with $Hy_k = y_{k+1},$ $k=1, \ldots , n-1$ and as close as possible to $A$.
In Figure \ref{fig:SkewHamiltonicity_Nearness_Isotropy} we show the skew-Hamiltonian property of $A$ and $H$, how close $H$ to $A$ is and the isotropy of $X=[x_1\;\ldots\;x_n]$ and $Y=[y_1\;\ldots\;y_n]$. It can be seen that our approach works quite accurate. 
The relative distance between $H$ and $A$ grows not that fast than the loss of the isotropy property of $\mathcal{K}_n(A,x_1)$. We observe that the gap between $\widetilde{\mathcal{L}}$ and $\mathcal{L}$ is approximate of order $\mathcal{O}(n\beta)$. 
The numerical experiment was performed with MATLAB Version 9.6.0.1114505 (R2019a) Update 2.
\begin{figure}
    \centering
    \includegraphics
    [width=1\textwidth]
    {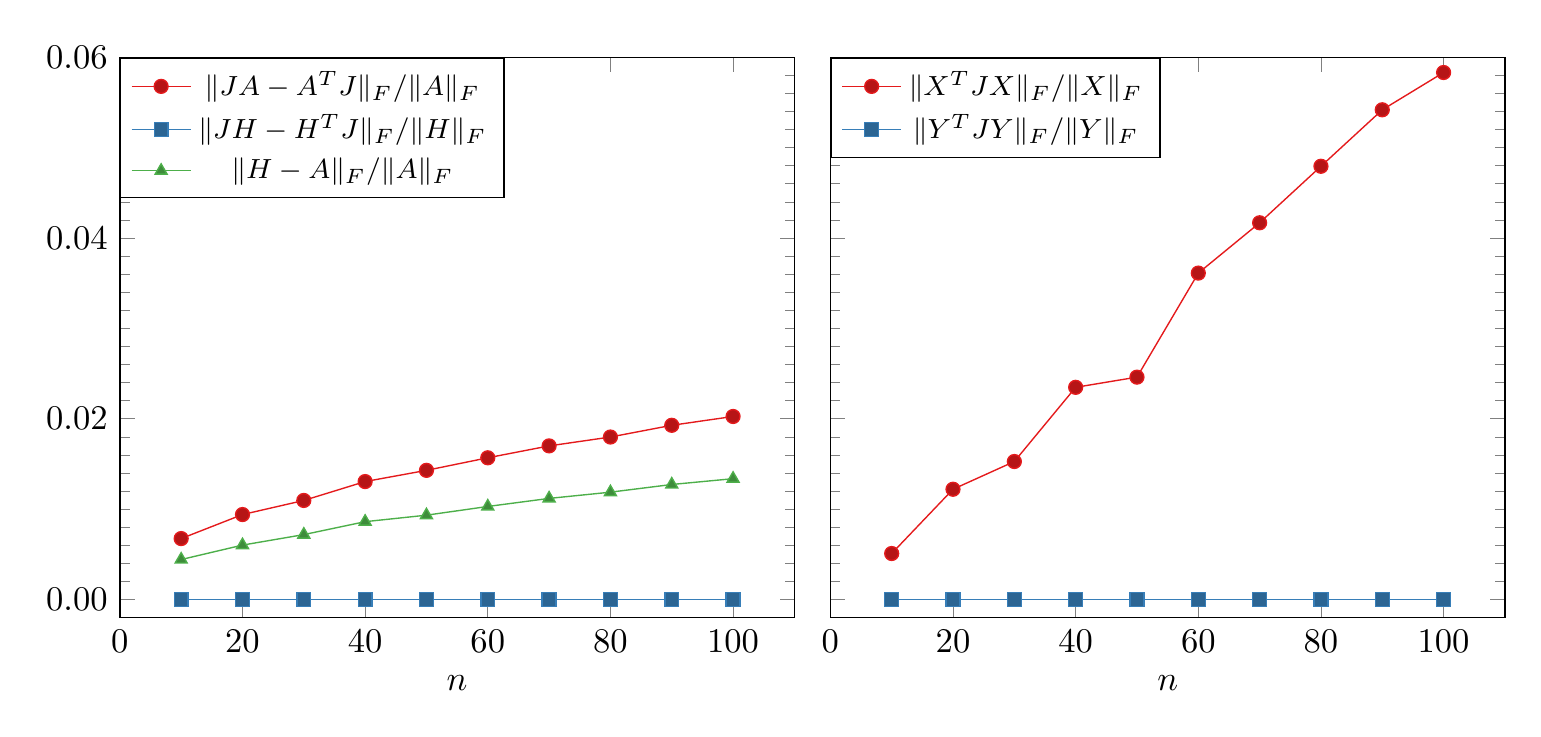}
    \caption[]{(\textit{left part}) The relative closeness of $H$ to $A$ and the relative variation from the skew-Hamiltonianicity of these two matrices is displayed.\\
    (\textit{right part}) The relative deviation from the isotropy property of the Krylov spaces $\mathcal{K}_n(A,x_1)$ and $\mathcal{K}_n(H,y_1)$ is measured.}
    \label{fig:SkewHamiltonicity_Nearness_Isotropy}
\end{figure}

Assuming $\mathbf{B}=(x_1, \ldots , x_n)$ is a basis of a Lagrangian subspace $\mathcal{L} \subset \mathbb{R}^{2n}$ we may also determine the matrix $H \in \mathbb{HK}(\mathbf{B})$ as close as possible to a given matrix $A \in \mathbb{R}^{2n \times 2n}$ such that the eigenvalues of $H|_{\mathcal{L}}$ coincide with a predetermined set of $n$ numbers $\lambda_1, \ldots , \lambda_n$. In fact, to find such a matrix $H$, the proof of Lemma \ref{lem:bestapprox1} can be carried out analogously starting with the characterization given in \eqref{equ:H_eigvals}. 

\section{Conclusions}
\label{sec:conclusions}

In this work we characterized the set of all skew-Hamiltonian matrices $H$ for which a given isotropic subspace $\mathcal{L}$ arises as a Krylov space. That is, given a basis $\mathbf{B}=(x_1, \ldots, x_n)$ of some Lagrangian subspace $\mathcal{L} \subset \mathbb{R}^{2n}$, we analyzed the set $\mathbb{HK}(\mathbf{B})$ of all skew-Hamiltonian matrices $H \in \mathbb{R}^{2n \times 2n}$ that satisfy $Hx_k = x_{k+1}$ for $k=1, \ldots , n-1$. We identified elements $H \in \mathbb{HK}(\mathbf{B})$ with minimal 2-norm and minimal Frobenius. Moreover, we characterized all matrices $H \in \mathbb{HK}(\mathbf{B})$ such that $H|_{\mathcal{L}}$ has $n$ predetermined eigenvalues. Finally, we analyzed a scenrio where these results can be applied.

\section*{Acknowledgements}
We are grateful to the anonymous referee whose report and suggestions helped to improve the manuscript significantly. Moreover, we would like to thank our colleague Christian Bertram for pointing out the connection to DMD to us.

\end{document}